\newcommand{\CC}{\mathbb{C}}
\newcommand{\Hom}{\mathrm{Hom}}
\newcommand{\TT}{\mathbb{T}}
\newcommand{\ZZ}{\mathbb{Z}}
\newtheorem{thm}{Theorem}[section]
\newtheorem*{Theorem*}{Theorem}
\newtheorem*{Corollary*}{Corollary}
\newtheorem{lem}[thm]{Lemma}
\newtheorem{prop}[thm]{Proposition}
\newtheorem{cor}[thm]{Corollary}
\theoremstyle{definition}
\newtheorem{defn}[thm]{Definition}
\theoremstyle{remark}
\newtheorem*{rmk*}{Remark}
\title{Topological Tensor Representations of $\mathfrak{gl}(V)$ for a space $V$ of countable dimension}
\author[Esposito]{Francesco Esposito}
\address{Francesco Esposito, Dipartimento di Matematica, Universit\`a degli Studi di Padova,
via Trieste 63, 35121 Padova, Italy}
\email{esposito@math.unipd.it}
\author[I. Penkov]{Ivan Penkov}
\address{Ivan Penkov, Jacobs University Bremen, 28759 Bremen, Germany}
\email{i.penkov@jacobs-university.de}
\dedicatory{to Yuri Ivanovich Manin on the occasion of his 85th birthday}
\begin{document}
	
	\maketitle
	
	\begin{abstract}
		
		The Lie algebra $ \mathfrak{gl}(V) $ is the Lie algebra of all endomorphisms of a countable-dimensional complex vector space $ V $. We define a tensor category $\widehat{\mathbf{T}}_{\mathfrak{gl}(V)}$ of topological representations of the Lie algebra $ \mathfrak{gl}(V) $, so that $ V $, $ \mathbf{V} := \mathrm{Hom}_{\CC}(V, \CC) $, and the adjoint representation $ \mathfrak{gl}(V) $ are objects of $\widehat{\mathbf{T}}_{\mathfrak{gl}(V)}$. This makes  $\widehat{\mathbf{T}}_{\mathfrak{gl}(V)}$ an analogue of the category of finite-dimensional modules over the finite-dimensional Lie algebra $\mathfrak{gl}(n)$.
		
		Our main result is that the category $\widehat{\mathbf{T}}_{\mathfrak{gl}(V)}$ is antiequivalent as a symmetric monoidal category to the category $\TT_{\mathfrak{gl}(\infty)}$ of tensor representations of the Lie algebra $\mathfrak{gl}(\infty)$ of finitary infinite matrices. The category $\TT_{\mathfrak{gl}(\infty)}$ has been studied extensively and is known to have a universality property as a tensor category. 
		
		\textbf{Keywords:} tensor representation, monoidal category, linearly compact topological space, topological tensor product
		
		\textbf{Mathematics Subject Classification 2020:} 17B10, 17B65, 46A13, 46A20
	\end{abstract}
	
	\section{Introduction}
	Let $V$ be a countable-dimensional vector space over $\CC$. The Lie algebra $\mathrm{End}V$ of all linear endomorphisms of $ V $ is a most natural infinite-dimensional analog of the Lie algebra $\mathfrak{gl}(n)$ for $n\in\mathbb{Z}_{n>0}$. However, the theory of representations of the Lie algebra $ \mathrm{End}V $ is much less developed than the representation theories of other infinite-dimensional analogues of $ \mathfrak{gl}(n) $ such as Kac-Moody Lie algebras or the Lie algebra $ \mathfrak{gl}(\infty) $. In this paper, by $ \mathfrak{gl}(\infty) $ we denote the ``smallest'' analogue of $ \mathfrak{gl}(n) $ among  Lie algebras of infinite matrices, i.e., $ \mathfrak{gl}(\infty) $ is the direct limit $ \varinjlim \mathfrak{gl}(n) $ for $ n\to\infty $; equivalently, $ \mathfrak{gl}(\infty) $ is the Lie algebra of finitary infinite matrices (infinite matrices with at most finitely many nonzero entries). By $ \mathfrak{gl}(V) $ we denote the Lie algebra $ \mathrm{End}V$.
	
	It is known that the Lie algebra $ \mathfrak{gl}(V) $ has no nontrivial finite-dimensional representations, and we are interested in the following question: \textit{what is a category of $ \mathfrak{gl}(V) $-modules, rich enough to resemble the category of all finite-dimensional $ \mathfrak{gl}(n) $-modules}? This same question, with $ \mathfrak{gl}(V) $ replaced by $\mathfrak{gl}(\infty)$, has been studied and various answers have been provided: see \cite{Cat} and \cite{PH}.
	
	One possible answer in the case of $ \mathfrak{gl}(\infty) $ is the category of tensor modules $\TT_{\mathfrak{gl}(\infty)}$, or $\TT_{\mathfrak{sl}(\infty)}$, introduced in \cite{DPS}, see also \cite{PStyr} and \cite{SS}. This category is an abelian symmetric monoidal category of $ \mathfrak{gl}(\infty) $-modules generated by the two natural representations $ V $ and $ V_{*} $ of $ \mathfrak{gl}(\infty) $, each of them being the restricted dual of the other. The category $\TT_{\mathfrak{gl}(\infty)}$ has a number of remarkable properties, in particular it is a non-semisimple Koszul category which is universal among non-rigid linear symmetric monoidal abelian categories generated by two objects $ X, Y $ with a pairing $ X\otimes Y\to\mathbf{1} $, where $ \mathbf{1}$ denotes monoidal unit. The category $\TT_{\mathfrak{gl}(\infty)}$ has also found notable applications beyond the theory of representations of $ \mathfrak{gl}(\infty) $ \cite{EHS}.
	
	In the case of the Lie algebra $ \mathfrak{gl}(V) $, an analogue $\TT_{\mathfrak{gl}(V)}$ of the category $\TT_{\mathfrak{gl}(\infty)}$ has been introduced in \cite{Mac} and has been further studied in \cite{C}.  The category $\TT_{\mathfrak{gl}(V)}$ is an abelian symmetric monoidal category of $ \mathfrak{gl}(V) $-modules generated by the $ \mathfrak{gl}(V) $-modules $ V $ and $ \mathbf{V} $:= $ \mathrm{Hom}_{\CC}(V, \CC) $, and it is proved in \cite{C} that $\TT_{\mathfrak{gl}(V)}$ is equivalent as a tensor category to $\TT_{\mathfrak{gl}(\infty)}$. Consequently, all categorical properties of $\TT_{\mathfrak{gl}(\infty)}$ hold also for $\TT_{\mathfrak{gl}(V)}$, in particular $\TT_{\mathfrak{gl}(V)}$ is universal in the same sense as $\TT_{\mathfrak{gl}(\infty)}$. 
	
	Still, $\TT_{\mathfrak{gl}(V)}$ does not qualify for a good answer to our question above, since the adjoint representation of $ \mathfrak{gl}(V) $ is not an object of $\TT_{\mathfrak{gl}(V)}$. Indeed, note that $ V\otimes\mathbf{V} $ is merely a submodule of $ \mathfrak{gl}(V) $ and the quotient $\mathfrak{gl}(V)/(V\otimes\mathbf{V})$ is not isomorphic to a subquotient of a direct sum of modules of the form $V^{\otimes p}\otimes \mathbf{V}^{\otimes q}$ for $p,q\in\mathbb{Z}_{\geq 0}$.
	On the other hand, 
	$\mathfrak{gl}(\infty) \simeq V\otimes V_{*}$, so the adjoint representation of $\mathfrak{gl}(\infty)$ is an object of $\TT_{\mathfrak{gl}(\infty)}$. 
	
	This is why we propose another answer to our central question. We construct a category $\widehat{\mathbf{T}}_{\mathfrak{gl}(V)}$ of topological $ \mathfrak{gl}(V) $-modules so that all three modules $ V, \mathbf{V} $, and $ \mathfrak{gl}(V) $ are objects of $\widehat{\mathbf{T}}_{\mathfrak{gl}(V)}$. While as an algebraic $ \mathfrak{gl}(V) $-module $ \mathfrak{gl}(V) $ has length 4, see \cite{BHB}, \cite{Mac} and \cite{HZ}, in the topological category $\widehat{\mathbf{T}}_{\mathfrak{gl}(V)}$ the object $ \mathfrak{gl}(V) $ has length 2, its only proper subobject being the trivial submodule generated by the identity operator.
	
	Our main result is that the category $\widehat{\mathbf{T}}_{\mathfrak{gl}(V)}$ is antiequivalent to the category $\TT_{\mathfrak{gl}(\infty)}$. Consequently, the category $\widehat{\mathbf{T}}_{\mathfrak{gl}(V)}$ is endowed with a symmetric tensor product $\widehat{\otimes}^!$, and moreover is a universal symmetric monoidal category. Its universality property is the ``mirror image'' of the universality property of the category $\TT_{\mathfrak{gl}(\infty)}$. The tensor product $\widehat{\otimes}^!$ is a particular instance of a tensor product introduced by A. Beilinson in \cite{Bei}.
	
	Here is a brief description of the contents of the paper. Section 2 is devoted to topological preliminaries, mainly to ind-linearly compact vector spaces and pro-discrete vector spaces. These types of spaces form two respective quasi-abelian semisimple categories which are antiequivalent. 
	In Section 3, we present our main objects of study: the categories $\mathbf{T}_{\mathfrak{gl}(V)}$ and $\widehat{\mathbf{T}}_{\mathfrak{gl}(V)}$. The former category is a completion of the category $\mathbb{T}_{\mathfrak{gl}(V)}$ and its objects are closed $ \mathfrak{gl}(V) $-subquotients of finite direct sums of tensor products of the form $ V^{\otimes p}\otimes\mathrm{Hom}_{\CC}(V^{\otimes q}, \CC)$. 
	We show that $\mathbf{T}_{\mathfrak{gl}(V)}$ and $\mathbb{T}_{\mathfrak{gl}(\infty)}$ are equivalent as abelian categories.
	The category $\widehat{\mathbf{T}}_{\mathfrak{gl}(V)}$ is defined as the continuous dual of the category $\mathbf{T}_{\mathfrak{gl}(V)}$, and it is antiequivalent to $\mathbf{T}_{\mathfrak{gl}(V)}$ as an abelian category.

	In Section 4 we introduce respective tensor products $\widehat{\otimes}^\ast$ and $\widehat{\otimes}^!$ on the categories $\mathbf{T}_{\mathfrak{gl}(V)}$ and $\widehat{\mathbf{T}}_{\mathfrak{gl}(V)}$, and show that in fact there is an antiequivalence of abelian symmetric monoidal categories $(\mathbf{T}_{\mathfrak{gl}(V)}, \widehat{\otimes}^\ast)$ and $(\widehat{\mathbf{T}}_{\mathfrak{gl}(V)}, \widehat{\otimes}^!)$. We then
	draw some corollaries of the antiequivalence of the categories $ \mathbf{T}_{\mathfrak{gl}(V)} $ and $\widehat{\mathbf{T}}_{\mathfrak{gl}(V)}$. In particular, objects of the form $ V^{\otimes p}\widehat{\otimes}^!\mathbf{V}^{\widehat{\otimes}^! q}$ are projective and $ \mathfrak{gl}(V) \simeq V\widehat{\otimes}^!\mathbf{V}$. Consequently, the object $ \mathfrak{gl}(V) $ (adjoint representation) is a projective cover of the simple object  $ \mathfrak{gl}(V)/\CC $. We also obtain explicit fomulas for the radical filtrations of indecomposable projectives, as well as for the Exts between simple objects of $\widehat{\mathbf{T}}_{\mathfrak{gl}(V)}$.
	
	\textbf{Acknowledgements.}
	This research has been partially supported by the project of the University of Padova BIRD203834/20.
	The second author thanks INdAM for facilitating a visit to the University of Padua in the fall of 2019, when some first collaboration plans between the suthors emerged.
	The second author acknowledges also partial support by the DFG through grant PE 980/8-1.
	
	\section{Preliminaries on topological vector spaces}
	In this section, we discuss the categories of topological vector spaces which play a role, and the relations between them.
	
	\subsection{Discrete and linearly compact topological vector spaces}
	The category of discrete vector spaces we consider is that of at most countable-dimensional vector spaces, with arbitrary linear maps as morphisms; i.e., we call a vector space $W$ \emph{discrete} if it is the union of a countable ascending family
	$$
	0 \subset W_1 \subset W_2 \subset \ldots
	$$
	of finite-dimensional vector spaces:
	$$
	W = \bigcup_{i\in \ZZ_{> 0}} W_i \ \ .
	$$
	
	The category of discrete vector spaces is abelian  and semisimple. The dual of a countable-dimensional vector space is in general not countable dimensional. Hence, taking (algebraic) duals does not preserve the category of discrete vector spaces, and taking the (algebraic) double dual does not yield back the vector space one started with. To get a well-behaved duality starting with discrete vector spaces, one endows their duals with the structure of \emph{linearly compact} vector spaces, which we now define.
	
	\begin{defn}
	  A topological vector space $Z$ is said to be \emph{linearly compact} if 
	\begin{enumerate}[(i)]
	    \item it has a fundamental system of open neighborhoods of $0$ given by a countable descending filtration of (open) subspaces of finite codimension
	    $$
	    Z = Z^{(0)} \supset Z^{(1)} \supset \ldots \ \ ;
	    $$
	    \item the canonical morphism 
	    $$
	    Z \rightarrow \varprojlim Z/Z^{(i)}
	    $$
	    is an isomorphism, where $\varprojlim Z/Z^{(i)}$ is endowed with the projective limit topology of the discrete topologies on the finite-dimensional vector spaces $Z/Z^{(i)}$.
	\end{enumerate}  
	\end{defn}
	
	A fundamental result about the category of linearly compact vector spaces with morphisms continuous linear maps, is that it is antiequivalent to the category of discrete vector spaces. Let us state this in the form of a lemma.
	
	\begin{lem} \label{lem:disc_lincomp}
    	 Let $W = \bigcup_i W_i$ be a discrete vector space. Then the dual space
    	$$W^\ast = \varprojlim (W_i)^\ast$$
    	is naturally a linearly compact topological vector space. Moreover, this construction is functorial and yields an antiequivalence between the category of discrete vector spaces and the category of linearly compact vector spaces.
    	\end{lem}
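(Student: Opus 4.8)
The plan is to build the functor $W \mapsto W^\ast$ explicitly, verify that its image lands in linearly compact spaces, and then exhibit a quasi-inverse given by the continuous dual, checking that both round-trips are naturally isomorphic to the identity. First I would record the algebraic identification. Since $W = \varinjlim_i W_i$ is a filtered colimit of its finite-dimensional subspaces and $\Hom_{\CC}(-,\CC)$ turns colimits in the first argument into limits, one has $W^\ast = \Hom_{\CC}(W,\CC) = \varprojlim_i (W_i)^\ast$ as vector spaces. I would then equip $W^\ast$ with the projective limit topology and set
$$
Z := W^\ast, \qquad Z^{(i)} := W_i^{\perp} = \{ f \in W^\ast : f|_{W_i} = 0 \} = \ker\big( W^\ast \to (W_i)^\ast \big).
$$
The inclusions $W_i \subseteq W_{i+1}$ give a descending chain $Z^{(0)} \supseteq Z^{(1)} \supseteq \cdots$, each $Z^{(i)}$ is open of finite codimension equal to $\dim W_i$ (because $Z/Z^{(i)} \cong (W_i)^\ast$), and $\bigcap_i Z^{(i)} = 0$ since a functional vanishing on every $W_i$ vanishes on $W$. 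This verifies condition (i), and condition (ii) is immediate because $\varprojlim_i Z/Z^{(i)} = \varprojlim_i (W_i)^\ast = W^\ast = Z$ is exactly how the topology was defined. Hence $W^\ast$ is linearly compact.

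Next I would check functoriality. Given a linear map $\phi \colon W \to W'$ of discrete spaces, the transpose $\phi^\ast \colon (W')^\ast \to W^\ast$, $f \mapsto f \circ \phi$, is linear, and I claim it is continuous: for a basic open $W_i^{\perp} \subseteq W^\ast$ one computes $(\phi^\ast)^{-1}(W_i^{\perp}) = \{ f : f|_{\phi(W_i)} = 0 \} = \phi(W_i)^{\perp}$, and since $\phi(W_i)$ is finite-dimensional it is contained in some $W'_j$, so $\phi(W_i)^{\perp}$ is an open subspace of finite codimension in $(W')^\ast$. Thus $W \mapsto W^\ast$ is a contravariant functor to linearly compact spaces.

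For the antiequivalence I would produce the quasi-inverse by taking the continuous dual. For a linearly compact $Z$ with filtration $(Z^{(i)})$, let $Z^\vee$ be the space of continuous linear functionals $Z \to \CC$ (with $\CC$ discrete). The key observation is that any such functional, having open kernel, must vanish on some $Z^{(i)}$; therefore
$$
Z^\vee = \bigcup_i (Z/Z^{(i)})^\ast = \varinjlim_i (Z/Z^{(i)})^\ast
$$
is a countable ascending union of finite-dimensional spaces, hence discrete. I would then construct the natural isomorphisms. On one side, using the canonical identification of a finite-dimensional space with its double dual, $(W^\ast)^\vee = \varinjlim_i \big( (W_i)^\ast \big)^\ast = \varinjlim_i W_i = W$. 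On the other side, $(Z^\vee)^\ast = \varprojlim_i \big( (Z/Z^{(i)})^\ast \big)^\ast = \varprojlim_i Z/Z^{(i)} = Z$, where the last equality is condition (ii). Both identifications are manifestly compatible with morphisms, giving the required natural isomorphisms $\mathrm{id} \cong (-)^\vee \circ (-)^\ast$ and $\mathrm{id} \cong (-)^\ast \circ (-)^\vee$.

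The step I expect to require the most care is the last one: checking that the canonical bijection $Z \to (Z^\vee)^\ast$ is not merely a linear isomorphism but a homeomorphism, i.e., that the projective limit topology reconstructed on $(Z^\vee)^\ast$ coincides with the original topology on $Z$. This follows from the fact that the filtration steps $Z^{(i)}$ are precisely the annihilators of the finite-dimensional subspaces $(Z/Z^{(i)})^\ast \subseteq Z^\vee$, so the two neighborhood bases of $0$ agree; but spelling out this matching, together with the naturality squares, is where the bookkeeping lies. Everything else reduces to the self-dual nature of finite-dimensional spaces and the interchange of $\Hom$ with filtered (co)limits.
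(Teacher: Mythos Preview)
Your proposal is correct and follows essentially the same approach as the paper: build the dual functor $W \mapsto W^\ast$ into linearly compact spaces via the projective limit of the $(W_i)^\ast$, build the continuous-dual functor in the other direction as the union of the $(Z/Z^{(i)})^\ast$, and verify that the two are quasi-inverse by reducing to finite-dimensional double duality. The paper's own proof is considerably terser and omits the explicit verifications (continuity of transposes, the homeomorphism check, and independence of presentation) that you spell out, but the strategy is identical.
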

	\begin{proof}
	    Given a presentation $W = \bigcup_i W_i$ of a discrete vector space as a countable ascending union of finite-dimensional subspaces, one gets a presentation of the dual $W^\ast$ as the projective limit $\varprojlim (W_i)^\ast$. Thus $W^\ast$ may be endowed with the projective limit topology, making it a linearly compact vector space. In addition, one observes that different presentations of $W$ give rise to equivalent linearly compact topologies on $W^\ast$. 
	    
	    Furthermore, the continuous dual $Z^\ast$ of a linearly compact vector space 
	    $Z = \varprojlim Z/Z^{(i)}$ is naturally a discrete space which is the union of the finite-dimensional vector spaces $(Z/Z^{(i)})^\ast$; and this construction is independent from the presentation.
	    
	    The above two constructions are functorial and mutually inverse.
	\end{proof}
	
	Here are some easy consequences of Lemma~\ref{lem:disc_lincomp}.
	
	\begin{lem}\label{lem:lc}
	    The following statements hold:
	    \begin{enumerate}[(i)]
	        \item \label{closed2} a continuous linear map between linearly compact vector spaces is closed, i.e., takes closed subsets to closed subsets;
	        \item \label{lin_comp_closure} if $U$ be a subspace of a linearly compact vector space $Z = \varprojlim Z_i$, then its closure $\overline{U}$ equals $\varprojlim U_i$, where $U_i$ is the image of $U$ in $Z_i$;
	        \item \label{union2} a linearly compact vector space is not the union of a countable family of nested proper closed subspaces;
	        \item \label{lin_comp_ab} the category of linearly compact vector spaces, with continuous linear maps as morphisms, is a semisimple abelian category.
	    \end{enumerate}
	\end{lem}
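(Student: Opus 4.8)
The plan is to derive all four statements from the antiequivalence of Lemma~\ref{lem:disc_lincomp} together with elementary manipulations of the defining filtration, beginning with (ii) since it underlies the rest. Write $Z=\varprojlim Z_i$ with $Z_i=Z/Z^{(i)}$ and continuous projections $\pi_i\colon Z\to Z_i$, and read $\varprojlim U_i$ as $\bigcap_i\pi_i^{-1}(U_i)\subseteq Z$. Each $U_i$ is closed in the discrete space $Z_i$, so each $\pi_i^{-1}(U_i)$ is closed and $\bigcap_i\pi_i^{-1}(U_i)$ is a closed subspace containing $U$; this gives $\overline U\subseteq\varprojlim U_i$. For the reverse inclusion I would take $z$ with $\pi_i(z)\in U_i$ for all $i$ and observe that every basic neighbourhood $z+Z^{(i)}$ meets $U$ (lift $\pi_i(z)\in U_i$ to an element of $U$), so $z\in\overline U$. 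This proves (ii) with no appeal to duality.

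For (iv), I would invoke Lemma~\ref{lem:disc_lincomp}: the category of linearly compact spaces is antiequivalent to the abelian semisimple category of discrete spaces. The opposite of an abelian category is abelian, and the property ``every short exact sequence splits'' is manifestly self-dual (reversing arrows sends a split sequence to a split sequence), as is decomposability into simple objects; concretely the simple objects on both sides are the one-dimensional spaces, and the antiequivalence turns the direct-sum decomposition of a discrete space into the product (inverse-limit) decomposition of its dual. Hence linearly compact spaces form a semisimple abelian category, which is (iv).

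Statement (i) is the essential point. Restricting to the closed (hence linearly compact) subspace $U$, it suffices to show that for a continuous linear map $g\colon U\to Z'$ the set-theoretic image is closed, i.e. $g(U)=\overline{g(U)}$. I would argue inside the abelian category of (iv): the categorical image of $g$ is the smallest closed subspace of $Z'$ through which $g$ factors, which is precisely $\overline{g(U)}$, and the canonical factorization yields an isomorphism $U/\ker g\xrightarrow{\ \sim\ }\overline{g(U)}$ of linearly compact spaces. Such an isomorphism is in particular a continuous linear bijection, so it is surjective as a map of sets; since its set-theoretic image is $g(U)$, we conclude $g(U)=\overline{g(U)}$. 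Equivalently, and more hands-on, one can prove the underlying ``compactness'' fact directly: by (ii) one has $\overline{g(U)}=\varprojlim(g(U))_j$, and a point $w$ of this limit is actually hit by $g$ because the nested nonempty closed cosets $U\cap g^{-1}(w+(Z')^{(j)})$ have nonempty intersection, their images in each finite-dimensional $Z'_j$ stabilizing and forming a surjective inverse system.

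The same intersection property gives (iii): a proper closed subspace of a topological vector space has empty interior, so if $Z=\bigcup_n Y_n$ with the $Y_n$ proper, closed and nested, one inductively picks basic cosets $c_{n+1}+Z^{(i_{n+1})}\subseteq c_n+Z^{(i_n)}$ disjoint from $Y_{n+1}$ (possible since no open coset can lie inside the interior-free $Y_{n+1}$), and any point of $\bigcap_n(c_n+Z^{(i_n)})$ --- nonempty by the intersection property, equivalently by completeness --- avoids every $Y_n$, a contradiction. I would alternatively note that $Z$ is a complete metrizable topological group (countable base at $0$, Hausdorff since $\bigcap_i Z^{(i)}=0$), hence a Baire space, which yields (iii) immediately as each $Y_n$ is nowhere dense. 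The main obstacle is thus concentrated in (i): the abstract abelian structure only delivers the closed subspace $\overline{g(U)}$ as the categorical image, so the genuine content is the identification of the set-theoretic image with its closure, and verifying this --- whether via the fact that categorical isomorphisms of linearly compact spaces are set-theoretic bijections, or via the nonemptiness of the relevant nested intersections --- is exactly where the topology of linear compactness enters; everything else is formal.
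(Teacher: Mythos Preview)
Your arguments for (ii), (iv), and (i) are correct and take a route different from the paper's: the paper deduces (i) first from the annihilator bijection of Lemma~\ref{lem:disc_lincomp} and then obtains (ii) from (i), whereas you prove (ii) by a direct neighbourhood argument and then extract (i) from the abelian structure (iv). Both orderings work. One side remark on (i): the statement says ``closed subsets'' but both your reduction and the paper's proof only treat closed \emph{subspaces}; in fact the literal subset statement fails (for instance the projection $A\oplus B\to A$ with $A=B=\prod_{i\geq 1}\CC$ sends the closed set $\{(e_n,\,n\cdot e_1):n\geq 1\}$ to $\{e_n:n\geq 1\}$, whose closure contains $0$). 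Only the subspace version is ever used, so this is a harmless imprecision you share with the paper.

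There is, however, a genuine gap in your proof of (iii). The assertion ``a proper closed subspace of a topological vector space has empty interior'' is false in the linearly topologized setting: the basic open subspaces $Z^{(i)}$ are themselves proper, closed, and open, hence have nonempty interior. Equivalently, a proper closed subspace of finite codimension is \emph{not} nowhere dense, so your Baire argument does not apply as stated. The fix is short: if some $Y_n$ is open then it has finite codimension, and since the $Y_m$ for $m\geq n$ are nested with non-increasing positive finite codimension they stabilize, so $\bigcup_m Y_m$ is proper; if no $Y_n$ is open then each genuinely has empty interior (a subspace containing a coset $c+Z^{(i)}$ must contain $Z^{(i)}$) and your Baire/nested-coset construction goes through. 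The paper's own argument for (iii) is different again, proceeding via the antiequivalence and the observation that an inverse limit of finite-dimensional spaces of unbounded dimension has uncountable dimension.
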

	
	\begin{proof}
	    (\ref{closed2}) follows from the antiequivalence of Lemma~\ref{lem:disc_lincomp} and from the fact that such antiequivalence yields, by taking annihilators, an order-preserving  bijection between the closed subspaces of a linearly compact vector space and the subspaces of its continuous dual, the latter being a discrete vector space.
	    
	    To obtain (\ref{lin_comp_closure}), observe that $U$ and $\varprojlim U_i$ have the same annihilator in $Z^\ast$, hence, by Lemma~\ref{lem:disc_lincomp} and (\ref{closed2}), one gets $\overline{U}=\varprojlim U_i$.
	    
	    (\ref{union2}) follows from the antiequivalence of Lemma~\ref{lem:disc_lincomp} and from the observation that the projective limit of surjective linear maps of finite-dimensional vector spaces of unbounded dimensions is of uncountable dimension.
	    
	    (\ref{lin_comp_ab}) follows, by the antiequivalence of Lemma~\ref{lem:disc_lincomp}, from the fact that discrete spaces form a semisimple abelian category.
	\end{proof}
	
	\subsection{Ind-linearly compact and pro-discrete topological vector spaces}

    Here we present the definitions and main properties of the categories of ind-linearly compact and pro-discrete vector spaces. These categories are dual to one another and constitute natural ambient categories for the study of topological tensor representations.
	
	\begin{defn}
	    A topological vector space $W$ is said to be \emph{ind-linearly compact} if:
	    \begin{enumerate}[(i)]
	        \item it is the union of an ascending sequence of closed linearly compact subspaces
	        $$
	        W = \bigcup_i W_i \ \ ;
	        $$
	        \item the topology of $W$ is the inductive limit topology of the topologies of the subspaces $W_i$, i.e., a subset $U$ of $W$ is closed if and only if, for each $i$, the subset $U_i = U \cap W_i$ is closed in $W_i$.
	    \end{enumerate}
	    We denote $\mathcal{I}$ the category whose objects are ind-linearly compact vector spaces and whose morphisms are continuous linear maps.
	\end{defn}
	
	The dual definition is that of a pro-discrete vector space.
	
	\begin{defn}
	    A topological vector space $Z$ is said to be \emph{pro-discrete} if:
	    \begin{enumerate}[(i)]
	        \item it has a fundamental system of open neighborhoods of $0$ given by a countable descending filtration of (open) subspaces of countable codimension
	    $$
	    Z = Z^{(0)} \supset Z^{(1)} \supset \ldots \ \ ;
	    $$
	    \item the canonical morphism 
	    $$
	    Z \rightarrow \varprojlim Z/Z^{(i)}
	    $$
	    is an isomorphism, where $\varprojlim Z/Z^{(i)}$ is endowed with the projective limit topology of the discrete topologies on the  vector spaces $Z/Z^{(i)}$.
	    \end{enumerate}
	    
	    We denote $\mathcal{P}$ the category whose objects are pro-discrete vector spaces and whose morphisms are continuous linear maps.
	\end{defn}
	
	The categories $\mathcal{I}$ and $\mathcal{P}$ contain the abelian semisimple categories of discrete vector spaces and linearly compact vector spaces. However, $\mathcal{I}$ and $\mathcal{P}$ are only \emph{quasi-abelian} categories (see \cite{Schneiders}, Definition 1.1.3 or \cite{Buh}, Definition 4.1), i.e., are additive categories in which 
	\begin{enumerate}[(i)]
	    \item every morphism admits kernel and cokernel;
	    \item the push-out of a kernel is a kernel;
	    \item the pull-back of a cokernel is a cokernel.
	\end{enumerate}
	
	Let us recall some basic definitions and facts. A quasi-abelian category has a canonical \emph{exact} structure (see \cite{Schneiders}, 1.1.7), for which the short exact sequences are the kernel-cokernel pairs $(i,p)$ (i.e., $i$ is the kernel of $p$ and $p$ is the cokernel of $i$). 
	Furthermore, recall that a morphism in a quasi-abelian category is called \emph{strict} if the canonical morphism from its coimage to its image is an isomorphism, or equivalently, if it is the composition of a strict monomorphism (i.e., a kernel) after a strict epimorphism (i.e., a cokernel).
	
	We now prove some basic results about the categories $\mathcal{I}$ and $\mathcal{P}$. These are special cases of statements for more general topological vector spaces, but, for the convenience of the reader, we give independent and self-contained proofs.
	\begin{prop}\label{prop:duals}
	    The categories $\mathcal{I}$ and $\mathcal{P}$  are dual to one another under the respective functors of taking continuous duals. Moreover, duality sends strict morphisms to strict morphisms.
	\end{prop}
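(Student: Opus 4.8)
The plan is to reduce the statement to the level-wise antiequivalence of Lemma~\ref{lem:disc_lincomp} by dualizing the defining ind- and pro-presentations, and then to deduce the preservation of strictness formally, using that a contravariant equivalence interchanges kernels and cokernels.

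First I would construct the two functors explicitly. Given $W \in \mathcal{I}$ with presentation $W = \bigcup_i W_i = \varinjlim W_i$, the closed embeddings $W_i \hookrightarrow W_{i+1}$ of linearly compact spaces dualize, by Lemma~\ref{lem:disc_lincomp}, to a projective system of discrete spaces $\cdots \to W_{i+1}^\ast \to W_i^\ast$; since these embeddings are monomorphisms and Lemma~\ref{lem:disc_lincomp} is an antiequivalence of abelian categories, the transition maps are surjective. I set $W^\ast := \varprojlim W_i^\ast$ with the projective limit topology. By surjectivity of the transition maps, each projection $W^\ast \to W_i^\ast$ is surjective with open kernel, which I denote $Z^{(i)}$, and $W^\ast/Z^{(i)} \cong W_i^\ast$ is discrete, hence of countable codimension; thus $W^\ast$ is pro-discrete. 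Dually, given $Z \in \mathcal{P}$ with $Z = \varprojlim Z/Z^{(i)}$, the surjections $Z/Z^{(i+1)} \twoheadrightarrow Z/Z^{(i)}$ dualize to closed embeddings of linearly compact spaces $(Z/Z^{(i)})^\ast \hookrightarrow (Z/Z^{(i+1)})^\ast$, and I set $Z^\ast := \varinjlim (Z/Z^{(i)})^\ast$ with the inductive limit topology, so that $Z^\ast$ is ind-linearly compact.

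The analytic core of the argument is to check that these functors indeed compute continuous duals and are mutually quasi-inverse. A continuous functional $W \to \CC$ (with $\CC$ discrete) is, by the universal property of the inductive limit topology, exactly a compatible family of continuous functionals on the $W_i$, so the underlying space of $W^\ast$ is the continuous dual of $W$; conversely, a continuous functional $Z \to \CC$ has open kernel, hence factors through some $Z/Z^{(i)}$, so $Z^\ast$ is likewise the continuous dual of $Z$. From this intrinsic description one checks, exactly as in the proof of Lemma~\ref{lem:disc_lincomp}, that the construction is independent of the chosen presentation and functorial, a continuous map inducing the pullback of functionals, which is continuous for the limit topologies. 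The double dual is then computed level-wise: using surjectivity of the transition maps to identify $W^\ast/Z^{(i)} \cong W_i^\ast$, one gets $(W^\ast)^\ast = \varinjlim (W_i^\ast)^\ast \cong \varinjlim W_i = W$, naturally in $W$, by the reflexivity in Lemma~\ref{lem:disc_lincomp}; the symmetric computation gives $(Z^\ast)^\ast \cong Z$. Hence $(-)^\ast$ is a contravariant equivalence between $\mathcal{I}$ and $\mathcal{P}$.

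Finally, the preservation of strictness is a formal consequence. A contravariant equivalence sends limits to colimits and colimits to limits, hence interchanges kernels with cokernels and, in particular, strict monomorphisms (kernels) with strict epimorphisms (cokernels). Writing a strict morphism $f$ as a strict monomorphism after a strict epimorphism, as recalled before the proposition, its dual $f^\ast$ is again a strict monomorphism after a strict epimorphism, hence strict. The step I expect to be the main obstacle is the analytic core: verifying that the concretely defined limits carry exactly the topology making them coincide with the continuous dual independently of the presentation, and in particular that the surjectivity of the transition maps in the pro-discrete case is what makes the projections onto the finite stages surjective and the double-dual identification go through. Once this is in place, both the duality and the strictness statement follow.
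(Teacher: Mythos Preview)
Your proposal is correct and follows essentially the same approach as the paper: identify the continuous dual of an ind-linearly compact space with the projective limit of the level-wise duals (and vice versa), then invoke Lemma~\ref{lem:disc_lincomp} to see that the two functors are quasi-inverse. You supply more detail than the paper does---in particular, you spell out the surjectivity of the transition maps and the double-dual computation, and you actually prove the strictness claim (which the paper's proof omits entirely); your formal argument that a contravariant equivalence interchanges kernels with cokernels, hence preserves strictness, is correct and is the natural way to handle that clause.
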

	\begin{proof}
	    Let us show that taking continuous duals yields well-defined contravariant functors from $\mathcal{I}$ to $\mathcal{P}$ and from $\mathcal{P}$ to $\mathcal{I}$, that are quasi-inverses of each other.
	    
	    Let $W = \bigcup_i W_i$ be a presentation of the ind-linearly compact vector space as union of an ascending sequence of closed linearly compact subspaces $W_i$. By the definition of inductive limit topology, a linear form $\varphi$ on $W$ is continuous if and only if all of its restrictions $\varphi_i = \varphi_{|W_i}$ are continuous. Therefore, the continuous dual $W^\ast$ of $W$ may be identified with the projective limit $\varprojlim W_i ^\ast$ of discrete spaces $W_i^\ast$. The continuous dual $W^\ast$ is hence naturally a pro-discrete vector space, and one obtains equivalent topologies for all presentations of $W$. It is straightforward to check that this yields a functor 
	    $$
	    (\phantom{a})^\ast : \mathcal{I} \rightarrow \mathcal{P}\ \ .
	    $$
	    
	    Let now $Z = \varprojlim Z/Z^{(i)}$ be a presentation of the pro-discrete vector space $Z$ as the projective limit of the discrete spaces $Z/Z^{(i)}$. By definition of the projective limit topology, a linear form $\varphi$ on $Z$ is continuous if and only if it factors through one of the discrete quotients $Z/Z^{(i)}$. Therefore, the continuous dual $Z^\ast$ may be identified with the union, i.e., the inductive limit, of the duals $(Z/Z^{(i)})^\ast$ of the discrete spaces $Z/Z^{(i)}$. It is hence naturally an ind-linearly compact space, and one gets equivalent topologies for all presentations of $Z$. Moreover, it is clear that a functor of continuous dual
	    $$
	    (\phantom{a})^\ast : \mathcal{P} \rightarrow \mathcal{I}
	    $$
	    is well defined.
	    
	    The fact that the above two functors of continuous dual are quasi-inverse to each other follows from the duality between discrete and linearly compact vector spaces stated in Lemma~\ref{lem:disc_lincomp}.
	    
	\end{proof}
	
	The next result is a special case of the open mapping theorem.
\begin{lem}\label{lem:iso-ind}
        \begin{enumerate}[(a)]
            \item Let $f:W\rightarrow Z$ be a continuous linear bijection of ind-linearly compact vector spaces. Then $f$ is bicontinuous, i.e., is an isomorphism in $\mathcal{I}$.
            \item Let $g:W'\rightarrow Z'$ be a continuous linear bijection of pro-discrete vector spaces. Then $g$ is bicontinuous, i.e., is an isomorphism in $\mathcal{P}$.
        \end{enumerate}
		 
	\end{lem}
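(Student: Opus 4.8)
The plan is to prove part (a) directly, reducing everything to the linearly compact building blocks of the two spaces, and then to obtain part (b) by duality via Proposition~\ref{prop:duals} (equivalently, by the dual argument with the ascending union replaced by the descending filtration). Throughout, the feature of linearly compact spaces that substitutes for compactness is Lemma~\ref{lem:lc}(\ref{union2}).

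First I would fix presentations $W=\bigcup_i W_i$ and $Z=\bigcup_j Z_j$ as ascending unions of closed linearly compact subspaces. The key step is the analogue of compactness: a continuous linear map from a linearly compact space into an ind-linearly compact space has image contained in a single member of the filtration. Indeed, for fixed $i$ the subspaces $f^{-1}(Z_j)\cap W_i$ form an ascending chain of closed subspaces of $W_i$ whose union is all of $W_i$; were they all proper, $W_i$ would be a countable union of nested proper closed subspaces, contradicting Lemma~\ref{lem:lc}(\ref{union2}). Hence $f(W_i)\subseteq Z_{j(i)}$ for some index $j(i)$, so that $f|_{W_i}\colon W_i\to Z_{j(i)}$ is a continuous injection of linearly compact spaces.

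Next I would invoke that linearly compact spaces form an abelian category (Lemma~\ref{lem:lc}(\ref{lin_comp_ab})): the continuous bijection $f|_{W_i}\colon W_i\to f(W_i)$ is both a monomorphism and an epimorphism, hence an isomorphism, with continuous inverse, and $f(W_i)$ is a closed linearly compact subspace of $Z$ (closedness also following from Lemma~\ref{lem:lc}(\ref{closed2})). In particular the restriction of $f^{-1}$ to each $f(W_i)$ is continuous. Since $f$ is surjective, $Z=\bigcup_i f(W_i)$ is again an ascending union of closed linearly compact subspaces; running the compactness argument in the other direction (for the continuous inclusions $Z_j\hookrightarrow Z$) shows that each $Z_j$ lies in some $f(W_{i(j)})$. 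Thus the filtrations $\{Z_j\}$ and $\{f(W_i)\}$ are mutually cofinal, so the inductive-limit topology of $\{f(W_i)\}$ coincides with the given topology of $Z$. Consequently a linear map out of $Z$ is continuous as soon as its restriction to each $f(W_i)$ is continuous; applying this to $f^{-1}$ shows that $f^{-1}$ is continuous and $f$ is an isomorphism in $\mathcal{I}$.

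For part (b) I would pass to continuous duals using Proposition~\ref{prop:duals}: the dual functor is an antiequivalence $\mathcal{P}\simeq\mathcal{I}^{\mathrm{op}}$ and hence reflects isomorphisms, so it suffices to show that $g^\ast\colon Z'^\ast\to W'^\ast$ is a continuous bijection in $\mathcal{I}$ and then appeal to part (a). Injectivity of $g^\ast$ is immediate from the surjectivity of $g$. The one genuinely nontrivial point — and the place where the content of the open mapping theorem is hidden — is the surjectivity of $g^\ast$, equivalently the statement that $g$ is open, i.e.\ that the image $g(W'^{(i)})$ of each open subspace contains some $Z'^{(j)}$; this is proved by the argument dual to the one above, tracing the descending filtrations $\{W'^{(i)}\}$ and $\{Z'^{(j)}\}$. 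I expect this openness/surjectivity step to be the main obstacle, with Lemma~\ref{lem:lc}(\ref{union2}) and the semisimplicity of the linearly compact blocks supplying exactly the leverage needed to carry it through.
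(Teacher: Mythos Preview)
Your argument for part (a) is correct and is essentially the paper's: both use Lemma~\ref{lem:lc}(\ref{union2}) to trap each $W_i$ inside some $Z_{j}$, observe via Lemma~\ref{lem:lc}(\ref{closed2}) that $f(W_i)$ is closed and linearly compact, and then apply Lemma~\ref{lem:lc}(\ref{union2}) once more to $Z_j=\bigcup_i\bigl(Z_j\cap f(W_i)\bigr)$ to obtain cofinality of the two filtrations.

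For part (b) there is a genuine gap. Your reduction to (a) via duality is exactly the paper's strategy, and you are in fact more careful than the paper in isolating the one nontrivial step, the surjectivity of $g^{\ast}$. But, as you yourself observe, surjectivity of $g^{\ast}$ amounts to $g$ being open, and that is precisely the conclusion of (b); so the reduction is circular. Your proposed fix, ``the argument dual to the one above,'' cannot work: the engine of the proof of (a) is Lemma~\ref{lem:lc}(\ref{union2}), and this statement has no analogue on the discrete side---a countable-dimensional space certainly admits a strictly descending chain of nonzero subspaces with trivial intersection, so no stabilization argument will force $g^{-1}(Z'^{(j)})\subseteq W'^{(i)}$. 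The paper's proof hides the same circularity behind the bare assertion that ``$g$ is continuous and bijective if and only if $f=g^{\ast}$ is continuous and bijective''; in fact only the implication ``$g^{\ast}$ bijective $\Rightarrow$ $g$ bijective'' follows from (a) together with Proposition~\ref{prop:duals}, while the converse implication is equivalent to (b) itself. A self-contained proof of (b) requires an independent ingredient: pro-discrete spaces are complete and metrizable, so the classical open mapping theorem (via the Baire category theorem) applies directly and gives that the continuous bijection $g$ is open.
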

	
	\begin{proof}
		
		Let $W = \bigcup_i W_i$ and $Z = \bigcup_j Z_j$ be presentations of the ind-linearly compact vector spaces $W$ and $Z$ as inductive limits of linearly compact closed subspaces.
		
		By the continuity of $f$, the subspaces $f^{-1}(Z_j)$ are closed in $W$. Moreover one has 
		$$
		W_i = \ \bigcup_j \ (W_i \cap f^{-1}(Z_j)).
		$$
		It follows that, by \ref{lem:lc}~(\ref{union2}), for every $i\in\mathbb{Z}_{>0}$ there is $j\in\mathbb{Z}_{>0}$ such that
		$$
		W_i \subset f^{-1}(Z_j).
		$$
		Furthermore, the restriction $f_{|W_i}: W_i \rightarrow Z_j$ is a continuous linear map between linearly compact vector spaces. Consequently, by \ref{lem:lc}~(\ref{closed2}), its image $f(W_i)$ is a closed subspace of $Z_j$, and thus of $Z$. This implies that the space $f(W_i)$ is linearly compact. Again by \ref{lem:lc}~(\ref{union2}), from 
		$$
		Z_j = \bigcup_i \ (Z_j \cap f(W_i))
		$$
		one infers that, for every $j\in\mathbb{Z}_{>0}$, there is $i\in\mathbb{Z}_{>0}$ such that $Z_j \subset f(W_i)$.
		
		From $W_i \subset f^{-1}(Z_j)$ and $Z_j \subset f(W_i)$ it is straightforward to deduce that the two inductive limit topologies are equivalent. This proves (a).
		
		To prove (b), observe that if $f = g^\ast$, then $g$ is continuous and bijective if and only if $f$ is continuous and bijective, and $g$ is an isomorphism in $\mathcal{P}$ if and only if $f$ is an isomorphism in $\mathcal{I}$. Therefore statements (a) and (b) are dual of each other under the duality of Proposition~\ref{prop:duals}, and hence are equivalent.
	\end{proof}
	
	\begin{lem}\label{lem:ind-linsubquot}
	        Let $Z$ be a closed subspace of the ind-linearly compact vector space $W$. Then $Z$, endowed with the subspace topology, and $W/Z$, endowed with the quotient topology, are ind-linearly compact topological vector spaces. Moreover, one has an isomorphism 
	        $$W\cong Z\oplus W/Z$$ 
	        in the category $\mathcal{I}$.
	\end{lem}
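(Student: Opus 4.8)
The plan is to treat the three assertions in order, reducing the topological statements about subspaces and quotients to the single open mapping theorem of Lemma~\ref{lem:iso-ind}(a), and to obtain the splitting by constructing a nested family of closed complements inside the defining filtration of $W$.

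First I would prove that $Z$ is ind-linearly compact. Fix a presentation $W=\bigcup_i W_i$ by closed linearly compact subspaces and set $Z_i:=Z\cap W_i$. Since $Z$ is closed and the subspace topology $W_i$ inherits from $W$ is exactly its linearly compact topology, each $Z_i$ is a closed subspace of the linearly compact space $W_i$, hence itself linearly compact by Lemma~\ref{lem:lc}(\ref{lin_comp_ab}); moreover $Z=\bigcup_i Z_i$. It remains to check that the subspace topology of $Z$ is the inductive limit topology of the $Z_i$. Because $Z$ is closed in $W$, a subset $A\subseteq Z$ is closed in $Z$ if and only if it is closed in $W$, i.e. if and only if $A\cap W_i$ is closed in $W_i$ for all $i$; and for $A\subseteq Z$ one has $A\cap W_i=A\cap Z_i$, which is closed in $W_i$ if and only if it is closed in $Z_i$. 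This is precisely the defining condition for the inductive limit topology, so $Z\in\mathcal{I}$.

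Next I would build a closed complement. Using that linearly compact spaces form a semisimple abelian category (Lemma~\ref{lem:lc}(\ref{lin_comp_ab})), I construct closed subspaces $C_i\subseteq W_i$ with $W_i=Z_i\oplus C_i$ and $C_i\subseteq C_{i+1}$ by induction: having $C_i$, note $C_i\cap Z_{i+1}\subseteq C_i\cap Z_i=0$, so $C_i+Z_{i+1}$ is an internal direct sum; it is closed in $W_{i+1}$ because it is the image of the continuous addition map $C_i\oplus Z_{i+1}\to W_{i+1}$ of linearly compact spaces, closed by Lemma~\ref{lem:lc}(\ref{closed2}). Choosing a closed complement $D_{i+1}$ to $C_i\oplus Z_{i+1}$ in $W_{i+1}$ and setting $C_{i+1}:=C_i\oplus D_{i+1}$ gives $W_{i+1}=Z_{i+1}\oplus C_{i+1}$ with $C_i\subseteq C_{i+1}$. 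Put $C:=\bigcup_i C_i$. A short computation shows $C_i\cap W_j=C_j$ for $i\ge j$, whence $C\cap W_j=C_j$ is closed in $W_j$ and $C$ is closed in $W$; similarly $C\cap Z=0$ and $Z+C=W$, so $W=Z\oplus C$ as abstract vector spaces.

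Finally I would upgrade this to a topological splitting and identify the quotient. By Step~1 both $Z$ and the closed subspace $C$ lie in $\mathcal{I}$, hence so does $Z\oplus C$, with the subspaces $W_i=Z_i\oplus C_i$ as its filtration. The addition map $Z\oplus C\to W$ is a continuous linear bijection of ind-linearly compact spaces, so by Lemma~\ref{lem:iso-ind}(a) it is an isomorphism in $\mathcal{I}$; this is the desired isomorphism $W\cong Z\oplus C$. The associated projection $\pi\colon W\to C$ is continuous with kernel $Z$, so it factors as a continuous bijection $\overline{\pi}\colon W/Z\to C$ through the quotient map $q\colon W\to W/Z$; as $q|_C\colon C\to W/Z$ is continuous and set-theoretically inverse to $\overline{\pi}$, the two maps are mutually inverse homeomorphisms, giving $W/Z\cong C$. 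In particular $W/Z$ is ind-linearly compact and $W\cong Z\oplus W/Z$ in $\mathcal{I}$. The main obstacle is the compatibility of the complements: semisimplicity gives a complement to each $Z_i$ separately, but these need not be nested, and the whole argument depends on arranging $C_i\subseteq C_{i+1}$ so that $C=\bigcup_i C_i$ is closed with $C\cap W_j=C_j$. Once the nested complements are in place, the topological statements are handled uniformly by the open mapping theorem of Lemma~\ref{lem:iso-ind}, which lets me avoid comparing the subspace, quotient, and inductive-limit topologies by hand.
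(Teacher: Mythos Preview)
Your proof is correct and follows essentially the same strategy as the paper's: both exploit the semisimplicity of the category of linearly compact spaces to build a nested family of complements $C_i$ to $Z_i$ in $W_i$ and then pass to the union. The only organizational difference is that the paper first verifies directly that the quotient topology on $W/Z$ agrees with the inductive limit of the $W_i/Z_i$ and afterwards constructs the splitting, whereas you construct the splitting first and then deduce that $W/Z\cong C$ is ind-linearly compact via the open mapping theorem of Lemma~\ref{lem:iso-ind}(a); your route is arguably tidier on this point, since the paper's ``one can verify'' for the quotient topology is left to the reader, and your explicit inductive construction of the $C_i$ spells out what the paper summarizes as ``proceeding inductively, one may choose all these splittings compatible''.
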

	\begin{proof}
	    Let $W = \bigcup_i W_i$ be a presentation of $W$ as a countable nested union of linearly compact closed subspaces, such that the topology on $W$ is the inductive limit topology. Then
	    $$
	    Z = \bigcup_i Z_i \ ,
	    $$
	    where $Z_i = Z \cap W_i$. It is straightforward to check that the subspace topology on $Z$ coincides with the inductive limit topology. Hence $Z$ is ind-linearly compact.
	    
	    Let us now discuss the quotient $W/Z$. Since $W_i$ and $Z_i$ are linearly compact, one has
	    $$
	    W/Z = \bigcup_i W_i / Z_i
	    $$
	    where the quotient topology on $W_i/Z_i$ is linearly compact. Furthermore, one can verify that the quotient topology on $W/Z$ coincides with the inductive limit topology of the topologies on $W_i / Z_i$. This proves that the space $W/Z$ is ind-linearly compact.
	    
	   Since the category of linearly compact vector spaces is abelian semisimple, it follows that for every $i$ one has a splitting $W_i \cong Z_i \oplus W_i/Z_i$. Moreover, proceeding inductively, one may choose all these splittings compatible, so that, taking the inductive limit, they give rise to an isomorphism
	   $$
	   W \cong Z\oplus W/Z
	   $$
	   in $\mathcal{I}$.
	\end{proof}
	
	\begin{prop}\label{prop:quasiab}
	    The categories $\mathcal{I}$ and $\mathcal{P}$ are antiequivalent quasi-abelian semisimple categories. Moreover, in these categories, kernels are closed embeddings and cokernels are surjective linear maps.
	\end{prop}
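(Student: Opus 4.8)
The plan is to derive everything from the antiequivalence of Proposition~\ref{prop:duals} together with the splitting provided by Lemma~\ref{lem:ind-linsubquot}. Since $\mathcal{P}$ is anti\-equivalent to $\mathcal{I}$ via continuous duality, and since both the quasi-abelian axioms (i)--(iii) and semisimplicity are self-dual conditions (kernels correspond to cokernels and pushouts to pullbacks under an antiequivalence, so the three axioms as a set are stable under passing to the opposite category), it suffices to establish all the assertions for $\mathcal{I}$; the corresponding statements for $\mathcal{P}$ then follow by applying the duality functor of Proposition~\ref{prop:duals}. Along the way I would note that $\mathcal{I}$ is additive, its finite biproducts being direct sums equipped with the product topology, which is again ind-linearly compact.

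First I would exhibit kernels and cokernels explicitly in $\mathcal{I}$. Given a continuous linear map $f\colon W\to Z$, the set-theoretic kernel $\ker f=f^{-1}(0)$ is closed in $W$, because $Z$ is Hausdorff and $f$ is continuous; by Lemma~\ref{lem:ind-linsubquot} it is ind-linearly compact, and the inclusion $\ker f\hookrightarrow W$ is readily checked to satisfy the universal property of the categorical kernel. Dually, since every object of $\mathcal{I}$ is Hausdorff, any morphism out of $Z$ annihilating $f$ also annihilates the closure $\overline{\operatorname{im}f}$; hence the cokernel of $f$ is the quotient map $Z\to Z/\overline{\operatorname{im}f}$, which by Lemma~\ref{lem:ind-linsubquot} again belongs to $\mathcal{I}$. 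This simultaneously verifies axiom~(i) and the ``moreover'' clause, namely that kernels are closed embeddings and cokernels are surjective.

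The two remaining quasi-abelian axioms reduce to the split case, which is where Lemma~\ref{lem:ind-linsubquot} does the real work. A kernel in $\mathcal{I}$ is a closed embedding $i\colon Z\hookrightarrow W$, and by the splitting $W\cong Z\oplus W/Z$ it is the inclusion of a direct summand; its pushout along any $g\colon Z\to Z'$ is then the summand inclusion $Z'\hookrightarrow Z'\oplus W/Z$, which is again a kernel, giving axiom~(ii). Symmetrically, a cokernel $p\colon W\to C$ is, via $W\cong\ker p\oplus C$, the projection onto a direct summand, so its pullback along any $g\colon C'\to C$ is the projection $\ker p\oplus C'\to C'$, again a cokernel, giving axiom~(iii). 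Finally, semisimplicity is immediate, since by Lemma~\ref{lem:ind-linsubquot} every kernel-cokernel pair splits.

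I expect the only delicate point to be bookkeeping rather than a conceptual obstacle: one must take the closure of the image in the description of cokernels, since the image of a morphism in $\mathcal{I}$ need not itself be closed, and one must confirm that the direct sums arising in the pushout and pullback computations carry the correct inductive-limit topology and remain objects of $\mathcal{I}$. Once the explicit kernel/cokernel descriptions and the splitting are in place, verifying the universal properties in the split situations is routine, and the transfer to $\mathcal{P}$ is formal.
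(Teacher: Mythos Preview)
Your approach is correct for the quasi-abelian axioms and is genuinely different from the paper's. The paper embeds $\mathcal{I}$ as a full subcategory of the category $\mathrm{Top}_\CC$ of topological vector spaces with linear topology, cites Positselski's result that $\mathrm{Top}_\CC$ is quasi-abelian with kernels the closed embeddings and cokernels the open surjections, and then uses Lemma~\ref{lem:ind-linsubquot} only to check that $\mathcal{I}$ is closed under kernels, cokernels, pushouts and pullbacks inside $\mathrm{Top}_\CC$. You instead verify axioms (ii) and (iii) directly: since every kernel is a split monomorphism by Lemma~\ref{lem:ind-linsubquot}, its pushout is again a split monomorphism, and dually for cokernels. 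This is more self-contained and avoids the external reference; the paper's route has the advantage of placing $\mathcal{I}$ in a broader framework that is used elsewhere.

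There is, however, one genuine omission. You establish that every cokernel is a surjective (open) map, but the proposition asserts---and the paper proves---the converse as well: every surjective continuous linear map in $\mathcal{I}$ (or $\mathcal{P}$) is a cokernel. This direction is not automatic: if $p\colon W\to Z$ is surjective, then $p$ factors through the continuous linear bijection $W/\ker p\to Z$, and one must know that this bijection is an isomorphism in $\mathcal{I}$. That is precisely the content of Lemma~\ref{lem:iso-ind}, which you never invoke. The paper uses Lemma~\ref{lem:iso-ind} explicitly at the end of its proof for exactly this step. The full characterization matters downstream: Lemma~\ref{lem:strict} (strict $\Leftrightarrow$ closed image) relies on knowing that every surjection is a strict epimorphism. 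So your ``bookkeeping'' remark undersells this point; the open-mapping lemma is a necessary ingredient, not just hygiene.
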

	
	\begin{proof}
	    Since the duality functor is additive, it follows from Proposition~\ref{prop:duals} that the categories $\mathcal{I}$ and $\mathcal{P}$ are antiequivalent as additive categories. To prove the proposition, it  is thus enough to show that $\mathcal{I}$ is quasi-abelian. Observe that $\mathcal{I}$ is a full subcategory of the category $\mathrm{Top}_\CC$ of topological vector spaces with linear topology (see \cite{Po}, Section 7). Furthermore, by \cite{Po}, Theorem 7.1(b), the category $\mathrm{Top}_\CC$ is quasi-abelian, its kernels are closed embeddings and its cokernels are open surjections. It follows hence, by Lemma~\ref{lem:ind-linsubquot}, that if $f:W\rightarrow Z$ is a morphism of $\mathcal{I}$, then its kernel and cokernel in $\mathrm{Top}_\CC$ are in fact objects of $\mathcal{I}$. Moreover,  pushing out or pulling back a morphism of $\mathcal{I}$ by a morphism of $\mathcal{I}$ affords a morphism of $\mathcal{I}$; it thus follows that the category $\mathcal{I}$ is quasi-abelian.
	    
	    The semisimplicity of $\mathcal{I}$ follows from the second part of Lemma~\ref{lem:ind-linsubquot}; the semisimplicity of $\mathcal{P}$ follows by duality. To conclude the proof, observe that Lemma~\ref{lem:iso-ind} implies that every continuous surjective linear map between two ind-linearly compact or two pro-discrete vector spaces is automatically open. This implies that the cokernels in $\mathcal{I}$ or $\mathcal{P}$ are the continuous linear surjections. Since we have already remarked that the kernels are the closed embeddings, this concludes the proof.
	\end{proof}

	\begin{lem}\label{lem:strict}
	Let $f$ be a morphism in either $\mathcal{I}$ or $\mathcal{P}$. Then the following statements are equivalent:
	\begin{enumerate}[(a)]
	    \item $f$ is strict;
	    \item $f$ has closed image.
	\end{enumerate}
	\end{lem}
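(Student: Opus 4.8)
The plan is to reduce the statement to the open mapping theorem of Lemma~\ref{lem:iso-ind} by writing out the canonical coimage-to-image factorization of $f$ concretely. Every structural fact I will need — that in $\mathcal{I}$ and $\mathcal{P}$ kernels are closed embeddings and cokernels are continuous surjections (Proposition~\ref{prop:quasiab}), and that a continuous linear bijection is automatically an isomorphism (Lemma~\ref{lem:iso-ind}) — holds verbatim in both categories. I would therefore run the argument once, for a morphism $f \colon W \to Z$ in $\mathcal{I}$; the case of $\mathcal{P}$ is word-for-word the same, and, as a cross-check, strictness itself is preserved by the duality of Proposition~\ref{prop:duals}.

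First I would identify the four canonical objects attached to $f$. By Proposition~\ref{prop:quasiab} the kernel is the closed embedding $\ker f = f^{-1}(0) \hookrightarrow W$, so the coimage $\operatorname{coim} f = \operatorname{coker}(\ker f)$ is $W/\ker f$ with the quotient topology; dually the cokernel is the surjection $Z \to Z/\overline{f(W)}$, so the image $\operatorname{im} f = \ker(\operatorname{coker} f)$ is the closure $\overline{f(W)}$ with the subspace topology. The induced canonical morphism $\bar f \colon \operatorname{coim} f \to \operatorname{im} f$ is then the continuous linear map $W/\ker f \to \overline{f(W)}$; algebraically it is the inclusion of $f(W) \cong W/\ker f$ into $\overline{f(W)}$, so it is always injective, with set-theoretic image exactly $f(W)$.

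Next I would combine this with the definition of strictness, namely that $f$ is strict if and only if $\bar f$ is an isomorphism in the category. Here the key observation is that $\bar f$ is an isomorphism as soon as it is bijective: being a continuous linear bijection of ind-linearly compact (respectively pro-discrete) spaces, it is bicontinuous by Lemma~\ref{lem:iso-ind}. Since $\bar f$ is injective with image $f(W)$, it is bijective if and only if $f(W) = \overline{f(W)}$, that is, if and only if $f$ has closed image. Chaining these gives the equivalences $f \text{ strict} \iff \bar f \text{ iso} \iff \bar f \text{ bijective} \iff f(W) \text{ closed}$, which is the asserted equivalence of (a) and (b).

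The sole nontrivial ingredient is the open mapping theorem of Lemma~\ref{lem:iso-ind}: it is what promotes the continuous bijection $\bar f$ to a genuine isomorphism. In a general topological category strictness is strictly stronger than having closed image, since the coimage-to-image comparison map can be a continuous bijection that fails to be open; the automatic bicontinuity available in $\mathcal{I}$ and $\mathcal{P}$ is precisely what makes the two conditions coincide here. The remaining steps are only the routine unwinding of the quasi-abelian coimage-image factorization through the descriptions of kernels and cokernels recorded in Proposition~\ref{prop:quasiab}.
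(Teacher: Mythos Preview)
Your proof is correct and is essentially the same argument the paper has in mind, just spelled out in full: the paper's one-line ``follows from the description of kernels and cokernels in Proposition~\ref{prop:quasiab}'' is precisely the unwinding you carry out, with the open mapping theorem (Lemma~\ref{lem:iso-ind}) already absorbed into Proposition~\ref{prop:quasiab}'s assertion that every surjection is a cokernel. Your explicit identification of $\operatorname{coim} f = W/\ker f$, $\operatorname{im} f = \overline{f(W)}$, and the observation that the comparison map is a continuous bijection onto $f(W)$, is exactly the content the paper leaves to the reader.
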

	\begin{proof}
	Follows from the description of kernels and cokernels in $\mathcal{I}$ and $\mathcal{P}$ given in Proposition~\ref{prop:quasiab}.
	\end{proof}

	\section{The categories $\mathbf{T}_{\mathfrak{gl}(V)}$ and $\widehat{\mathbf T}_{\mathfrak{gl}(V)}$}
	
	In this section we introduce our main object of study, the category $\widehat{\mathbf T}_{\mathfrak{gl}(V)}$. This is a category of topological representations of the Lie algebra $\mathfrak{gl}(V)$, and we define it as the dual of a completion $\mathbf{T}_{\mathfrak{gl}(V)}$ of the category $\mathbb{T}_{\mathfrak{gl}(\infty)}$.

	\subsection{Topological vector spaces of mixed tensors}
	
	We fix a countable-dimensional vector space $V$ and a basis $\mathcal{V} = \{v_i\}_{i\in\mathbb{Z}_{>0}}$ of $V$. Then $V = \bigcup_i V_i$, where $V_i$ is the span of the first $i$ basis vectors. We consider $ V $ as a discrete topological vector space. Therefore the Lie algebra $\mathfrak{gl}(V)$ of linear endomorphisms of $V$ coincides with the Lie algebra of continuous endomorphisms of $V$ as a topological vector space. Inside the Lie algebra $\mathfrak{gl}(V)$ there is the infinite-dimensional subalgebra $\mathfrak{h}$ of endomorphisms of $V$ which have diagonal matrices with respect to the basis $\mathcal{V}$. We denote by $\mathbf{V}$ the space $ \mathrm{Hom}_{\CC}(V, \CC) $ which, of course, is also the continuous dual of $V$. Since $V = \bigcup_i V_i$, it follows that 
	$$
	\mathbf{V} = \varprojlim V_i ^\ast,
	$$
	and thus $ \mathbf{V} $ can be considered as a linearly compact vector space.
	Analogously, we consider the space $V^{\otimes p} = \bigcup_i V_i ^{\otimes p}$ as a discrete topological vector space and its dual $(V^{\otimes p})^{\ast} = \mathrm{Hom}_{\mathbb{C}}(V^{\otimes p}, \mathbb{C})$ as a linearly compact topological vector space.
	
	In what follows, we use the superscript $^\ast$ to indicate continuous dual space. The precise meaning will depend on the context, as we can apply $^\ast$ to a discrete, linearly compact, ind-linearly compact, or pro-discrete vector space.

	Next, we define the spaces $\mathbf{V}^{p,q}$ as $V^{\otimes p}\otimes (V^{\otimes q})^*$. These are naturally ind-linearly compact vector spaces. Indeed,
	$$
	\mathbf{V}^{p,q} = \bigcup_i \ (V_i ^{\otimes p}\otimes (V^{\otimes q})^*),
	$$
	which yields a presentation of $\mathbf{V}^{p,q}$ as an ascending union of linearly compact vector spaces, since the spaces $V_i ^{\otimes p}$ are finite dimensional and the spaces $(V^{\otimes q})^*$ are linearly compact.
	
	Other relevant topological vector spaces are obtained by taking continuous duals of the spaces of mixed tensors just defined. More precisely, we set 
	$$
	\widehat{\mathbf{V}}^{p,q}
	:= 
	(\mathbf{V}^{q,p})^{\ast}.
	$$
	The structure of pro-discrete topological vector space on $ \widehat{\mathbf{V}}^{p,q}$ may be seen explicitly as follows:
	$$
	\widehat{\mathbf{V}}^{p,q} = \Hom_{\mathbb{C}} (\varinjlim V_i^{\otimes q}, V^{\otimes p}) = 
	\varprojlim \Hom_{\mathbb{C}} ( V_i ^{\otimes q},  V^{\otimes p}) = \varprojlim \ ( V_i ^{\ast}\otimes V^{\otimes p}).
	$$
	
	Observe that both topological vector spaces $V$ and $\mathbf{V}$ have obvious  $\mathfrak{gl}(V)$-module structures, therefore the spaces of mixed tensors $\mathbf{V}^{p,q}$ and $\widehat{\mathbf{V}}^{p,q}$ are $ \mathfrak{gl}(V) $-modules.
	
	\subsection{Structure of $\mathbf{V}^{p,q}$ as an $\mathfrak{h}$-module}
	Recall that an element $\chi\in \mathfrak{h}^\ast$ is called a weight, and the $\chi$-weight space of a $\mathfrak{gl}(V)$-module $W$ is the subspace
	$$
	W^\chi = \left\{ w\in W \ | \ \forall t\in\mathfrak{h}, \ tw = \chi(t)w \right\} \ .
	$$
	The sum of all one-dimensional $\mathfrak{h}$-submodules of $W$ is the \textit{weight part} $W^{wt}$ of $W$; it is the maximal semisimple $\mathfrak{h}$-submodule of $W$.
	
	\begin{lem} \label{lem: weightpart}
	The following statements hold:
	\begin{enumerate}
	    \item[(a)]\label{product}
	    The subspaces $V_i ^{\otimes p}\otimes (V^{\otimes q})^*$ are $\mathfrak{h}$-submodules of the $\mathfrak{gl}(V)$-module $\mathbf{V}^{p,q}$. For any weight $\chi\in \mathfrak{h}^\ast$, the weight space $(V_i ^{\otimes p}\otimes (V^{\otimes q})^*)^\chi$ is finite dimensional, and 
	$$
	V_i ^{\otimes p}\otimes (V^{\otimes q})^* = \prod_{\chi} (V_i ^{\otimes p}\otimes (V^{\otimes q})^*)^\chi
	$$
	with only a countable number of weights occurring. Furthermore, 
	
	$$(\mathbf{V}^{p,q})^{wt} = V^{p,q} := V^{\otimes p} \otimes (V_\ast)^{\otimes q}  $$ 
	where $V_\ast = (V^\ast)^{wt}$.
	
	\item[(b)] \label{bij}
	There is an order-preserving bijection between closed $\mathfrak{h}$-stable subspaces of $\mathbf{V}^{p,q}$ and $\mathfrak{h}$-stable subspaces of ${V}^{p,q} := (\mathbf{V}^{p,q})^{wt}$, given in one direction by taking
	weight part, and in the other direction by taking closure. In particular $(\mathbf{V}^{p,q})^{wt}$ is dense in
	$\mathbf{V}^{p,q}$.
	\end{enumerate}

	\end{lem}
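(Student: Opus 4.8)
The plan is to reduce both statements to the linearly compact building blocks $W_i := V_i^{\otimes p}\otimes (V^{\otimes q})^\ast$ and to exploit that $\mathfrak{h}$ acts diagonally with finite-dimensional weight spaces. For part (a), I would first note that $\mathfrak{h}$ preserves each $V_i$ (a diagonal operator preserves the span of any set of basis vectors), hence acts on $V_i^{\otimes p}$ and, via the contragredient action, on $(V^{\otimes q})^\ast$, which gives the $\mathfrak{h}$-stability of $W_i$. To obtain the weight decomposition I would decompose the discrete module $V^{\otimes q}=\bigoplus_\nu (V^{\otimes q})_\nu$ into its finite-dimensional weight spaces and dualize: since $(\bigoplus_\nu A_\nu)^\ast=\prod_\nu A_\nu^\ast$, the space $(V^{\otimes q})^\ast$ is the product of the finite-dimensional spaces $((V^{\otimes q})_\nu)^\ast$, each of weight $-\nu$, and its weight part is the corresponding direct sum, canonically $(V_\ast)^{\otimes q}$. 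Tensoring with the finite-dimensional $V_i^{\otimes p}=\bigoplus_\alpha (V_i^{\otimes p})_\alpha$ commutes with this product (finite-dimensionality of $V_i^{\otimes p}$ is exactly what makes $-\otimes$ commute with $\prod$), so $W_i=\prod_\chi W_i^\chi$ with $W_i^\chi=\bigoplus_{\alpha-\nu=\chi}(V_i^{\otimes p})_\alpha\otimes ((V^{\otimes q})_\nu)^\ast$ finite-dimensional; countability of the set of occurring weights follows from countability of the basis. Finally, taking weight parts commutes with the nested union over $i$, and $(A\otimes B)^{wt}=A\otimes B^{wt}$ for a finite-dimensional semisimple $A$, yielding $(\mathbf V^{p,q})^{wt}=\bigcup_i V_i^{\otimes p}\otimes (V_\ast)^{\otimes q}=V^{\otimes p}\otimes (V_\ast)^{\otimes q}$.

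For part (b), the key algebraic input is that any $\mathfrak{h}$-stable subspace of a semisimple weight module with distinct finite-dimensional weight spaces is homogeneous, i.e.\ equals the direct sum of its intersections with the weight spaces; this follows from a Vandermonde argument, choosing $t\in\mathfrak{h}$ whose values on a given finite set of weights are distinct. Applying this to the discrete dual $W_i^\ast$ and transporting through the annihilator bijection furnished by Lemma~\ref{lem:disc_lincomp} and Lemma~\ref{lem:lc}, I obtain that every closed $\mathfrak{h}$-stable subspace $U$ of $W_i$ has the form $U=\prod_\chi U^\chi$ with $U^\chi\subseteq W_i^\chi$, that $U^{wt}=\bigoplus_\chi U^\chi$, and that the closure of $\bigoplus_\chi U^\chi$ in $W_i$ is exactly $\prod_\chi U^\chi$. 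This establishes the desired bijection, with its two inverse operations, at the level of each linearly compact $W_i$.

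It remains to globalize to $\mathbf V^{p,q}=\bigcup_i W_i$. I would define the forward map by $U\mapsto U^{wt}$ and the backward map by $S\mapsto \overline S$, both visibly order-preserving, and verify the two round trips. Since each $W_i$ is closed with subspace topology equal to its intrinsic one (Lemma~\ref{lem:ind-linsubquot}) and $U$ is closed iff $U\cap W_i$ is closed for every $i$, a closed $\mathfrak{h}$-stable $U$ satisfies $U\cap W_i=\prod_\chi U_i^\chi$, whence $U^{wt}=\bigoplus_\chi S^\chi$ with $S^\chi=\bigcup_i U_i^\chi$; using the per-$W_i$ closure computation one gets $U\cap W_i=\overline{\bigoplus_\chi U_i^\chi}\subseteq\overline{U^{wt}}$ for all $i$, so $\overline{U^{wt}}=U$. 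Conversely, for an $\mathfrak{h}$-stable $S=\bigoplus_\chi S^\chi\subseteq V^{p,q}$ I would show $\overline S=\bigcup_i\prod_\chi S_i^\chi$ (with $S_i^\chi=S^\chi\cap W_i^\chi$) by checking that this set is closed, its intersection with each $W_j$ being $\prod_\chi S_j^\chi$, and that it is contained in $\overline S$; then $(\overline S)^{wt}=\bigcup_i\bigoplus_\chi S_i^\chi=S$. Density of $(\mathbf V^{p,q})^{wt}$ is the special case $U=\mathbf V^{p,q}$. I expect the main obstacle to be precisely the control of closures in the inductive-limit topology: one must check that the naive ``union of the $W_i$-closures'' is genuinely closed and coincides with the global closure, which is where the interplay between Lemma~\ref{lem:lc} and Lemma~\ref{lem:ind-linsubquot} is essential.
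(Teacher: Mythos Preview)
Your proposal is correct and follows essentially the same strategy as the paper: reduce to the linearly compact pieces $W_i=V_i^{\otimes p}\otimes (V^{\otimes q})^\ast$, establish the product-of-weight-spaces structure there, and then globalize via the inductive limit, checking the compatibility $\overline{A_i}=\overline{A_{i+1}}\cap W_i$. The only cosmetic difference is that in part~(a) you obtain the product decomposition by directly dualizing $V^{\otimes q}=\bigoplus_\nu (V^{\otimes q})_\nu$, whereas the paper phrases the same computation through the inverse system $\varprojlim_j V_i^{\otimes p}\otimes (V_j^{\otimes q})^\ast$, and in part~(b) you make explicit the Vandermonde/annihilator argument that the paper leaves as an observation.
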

	
	\begin{proof} 
	    Recall that $\mathcal{V} = \{v_i\}_{i\in\mathbb{Z}_{>0}}$ is the fixed basis of $V$ with respect to which $\mathfrak{h}$ is the subalgebra of diagonal matrices. Then, for each $i\in\mathbb{Z}_{>0}$, the vector $v_i$ is of weight $\varepsilon_i$, where $\varepsilon_i$ is the projection to the $i$-th component of $\mathfrak{h}$. So the weights of $V$ are all $\varepsilon_i$ for $i>0$. Furthermore, the linear forms $v_i ^\ast$ form a topological basis of $\mathbf{V}$ which is dual to $\mathcal{V}$, and $v_i ^\ast$ has weight $-\varepsilon_i$. 
	    
	    Let us now study the weight spaces of $V_i ^{\otimes p}\otimes (V^{\otimes q})^*$. Observe that the space $V_i ^{\otimes p}\otimes (V^{\otimes q})^*$ is linearly compact and may be realized as the following inverse limit
	    $$
	    V_i ^{\otimes p}\otimes (V^{\otimes q})^* = 
	    \varprojlim_j 	V_i ^{\otimes p}\otimes (V_j ^{\otimes q})^* \ .
	    $$
	    The canonical surjection 
	    $$
	    V_i ^{\otimes p}\otimes (V_j ^{\otimes q})^* \rightarrow
	    	V_i ^{\otimes p}\otimes (V_{j-1} ^{\otimes q})^*
	    $$
	    is a surjective homomorphism of $\mathfrak{h}$-modules which has a unique right inverse due to the fact that it is an isomorphism when restricted to the direct sum of the weight subspaces of 
	    $V_i ^{\otimes p}\otimes (V_j^{\otimes q})^*$ corresponding to weights of 
	    $V_i ^{\otimes p}\otimes (V_{j-1}^{\otimes q})^*$. It follows that all weight spaces of 
	    $V_i ^{\otimes p}\otimes (V^{\otimes q})^*$ are finite dimensional. Furthermore, the canonical map
	    $$
	    \prod_\chi (V_i ^{\otimes p}\otimes (V^{\otimes q})^*)^\chi =
	    \varprojlim_\chi (V_i ^{\otimes p}\otimes (V^{\otimes q})^* )^\chi
	    \rightarrow 
	    \varprojlim_j V_i ^{\otimes p}\otimes (V_j^{\otimes q})^*
	    $$
	    	is an isomorphism of topological vector spaces. The weights of 
	$V_i ^{\otimes p}\otimes (V^{\otimes q})^*$ are of the form $\chi = \sum_{j=1}^{\infty}n_j \varepsilon_j$, where the coefficients $n_j$ are integers satisfying $-q \leq n_j \leq p$ for $j\leq i$, $-q \leq n_j \leq 0$ for $j > i$, and $\sum_j n_j = p-q$. 
	Furthermore, the above implies
	\begin{equation*}
	    \begin{split}
	    (\mathbf{V}^{p,q})^{wt} & = \bigcup_i (V_i ^{\otimes p}\otimes (V^{\otimes q})^*)^{wt} =
	\bigcup_{i,j} (V_i ^{\otimes p}\otimes (V_j^{\otimes q})^*)^{wt} \\
	& = 
	\bigcup_{i,j} (V_i ^{\otimes p}\otimes (V_j^{\otimes q})^*) 
	=  V ^{\otimes p}\otimes V_\ast^{\otimes q} = {V}^{p,q} \ ,
	\end{split}
	\end{equation*}
		where the injection 
	$V_i ^{\otimes p}\otimes (V_{j-1}^{\otimes q})^*
	\hookrightarrow
	V_i ^{\otimes p}\otimes (V_j^{\otimes q})^*$ is the unique homomorphism of $\mathfrak{h}$-modules which is right inverse to the canonical surjection 
	$V_i ^{\otimes p}\otimes (V_j^{\otimes q})^*
	\rightarrow
	V_i ^{\otimes p}\otimes (V_{j-1}^{\otimes q})^*$.
	This concludes the proof of (a).
	
	Let us now prove (b). Observe that, for each $i\geq 1$, there is an order-preserving bijection between the closed $\mathfrak{h}$-stable subspaces of $\prod_\chi (V_i ^{\otimes p}\otimes (V^{\otimes q})^*)^\chi$ and the $\mathfrak{h}$-stable subspaces of its weight part $\bigoplus_\chi (V_i ^{\otimes p}\otimes (V^{\otimes q})^*)^\chi$. If $A$ is an $\mathfrak{h}$-stable subspace of ${V}^{p,q}$, then $A = \bigcup_i A_i$ where $A_i = A \cap (V_i ^{\otimes p}\otimes (V^{\otimes q})^*)^{wt}$. 
	Furthermore, one checks that $\overline{A_i} = \overline{A_{i+1}}\cap (V_i ^{\otimes p}\otimes (V^{\otimes q})^*)$.
	Therefore the closure $\overline{A}$ of $A$ in $\mathbf{V}^{p,q}$ is equal to $\bigcup_i \overline{A_i}$, where $\overline{A_i}$ is the closure of $A_i$ in $V_i ^{\otimes p}\otimes (V^{\otimes q})^*$. Thus, taking the weight part gives rise to an order-preserving bijection between closed $\mathfrak{h}$-stable subspaces of $\mathbf{V}^{p,q}$ and $\mathfrak{h}$-stable subspaces of $V^{p,q}$. The inverse bijection is given by taking the closure. In particular, one has $\overline{(\mathbf{V}^{p,q})^{wt}}=\mathbf{V}^{p,q}$, so $(\mathbf{V}^{p,q})^{wt}$ is dense in $\mathbf{V}^{p,q}$. This concludes the proof of (b).
	\end{proof}

	\subsection{The category $\mathbf{T}_{\mathfrak{gl}(V)}$}
	We now define a completion of the category $\mathbb{T}_{\mathfrak{gl}(V)}$ from \cite{C}.

	\begin{defn}\label{def:tglv}
		The objects of the category $\mathbf{T}_{\mathfrak{gl}(V)}$ are topological vector spaces of the form $Z/U$, where $Z$ is a closed $\mathfrak{gl}(V)$-stable subspace of a finite direct sum of topological vector spaces of the form $\mathbf{V}^{p,q}$, and $U$ is a closed $\mathfrak{gl}(V)$-subspace of $Z$. The morphisms in the category $\mathbf{T}_{\mathfrak{gl}(V)}$ are $\mathfrak{gl}(V)$-equivariant continuous linear maps.
	\end{defn}
	
		\begin{lem}\label{T-split}
		Let $W$ be an object of $\mathbf{T}_{\mathfrak{gl}(V)}$ and let $Z$ be a closed $\mathfrak{h}$-stable subspace of $W$. Then there is a closed $\mathfrak{h}$-stable subspace $U$ of $W$ such that the canonical map $Z\oplus U \rightarrow W$ is an $\mathfrak{h}$-equivariant topological isomorphism. 
	\end{lem}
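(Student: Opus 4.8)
The plan is to realize $W$ concretely through the filtration by the linearly compact ``levels'' coming from its defining presentation, and then to produce the $\mathfrak{h}$-stable complement one level at a time, taking care that the complements chosen at successive levels are compatible. First I would fix a presentation $W = Z'/U'$ with $Z'$ a closed $\mathfrak{h}$-stable subspace of $M = \bigoplus_{k} \mathbf{V}^{p_k,q_k}$ and $U' \subseteq Z'$ closed. Writing $M = \bigcup_i L_i$ with $L_i = \bigoplus_k (V_i^{\otimes p_k}\otimes (V^{\otimes q_k})^{\ast})$, an ascending union of linearly compact subspaces, I set $W_i := (Z'\cap L_i)/(U'\cap L_i)$. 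Using Lemma~\ref{lem: weightpart}(a) and the fact that closed subspaces and quotients of ind-linearly compact spaces are ind-linearly compact (Lemma~\ref{lem:ind-linsubquot}), each $W_i$ is linearly compact, the maps $W_i \hookrightarrow W_{i+1}$ are closed embeddings, $W = \bigcup_i W_i$ carries the inductive limit topology, and
$$W_i = \prod_\chi W_i^\chi,$$
a product of its \emph{finite-dimensional} weight spaces. It is essential to observe that the \emph{global} weight space $W^\chi = \bigcup_i W_i^\chi$ need \emph{not} be finite-dimensional: already $(\mathbf{V}^{2,1})^{\varepsilon_1}$ is infinite-dimensional, containing all $v_1\otimes v_j \otimes v_j^\ast$. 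This is exactly what rules out a naive splitting of each weight space and forces a level-by-level argument.

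Next I would analyze $Z$ at each level. Since $Z$ is closed in $W$, each $Z\cap W_i$ is a closed $\mathfrak{h}$-stable subspace of the linearly compact product $W_i = \prod_\chi W_i^\chi$; by the order-preserving bijection between closed $\mathfrak{h}$-stable subspaces of such a product and $\mathfrak{h}$-stable subspaces of its discrete weight part — the per-level statement already used in the proof of Lemma~\ref{lem: weightpart}(b), which holds for any linearly compact $\mathfrak{h}$-module that is a product of finite-dimensional weight spaces — one obtains $Z\cap W_i = \prod_\chi (Z^\chi\cap W_i^\chi)$. Thus for each weight $\chi$ the subspace $A^\chi := Z^\chi$ is exhausted by the flag $\{A^\chi\cap W_i^\chi\}_i$. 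The core of the proof is to build, for each $\chi$, a complement $B^\chi$ of $A^\chi$ in $W^\chi$ that is \emph{compatible with this flag}, i.e. with $W_i^\chi = (A^\chi\cap W_i^\chi)\oplus (B^\chi\cap W_i^\chi)$ for every $i$. I would do this inductively in $i$: given $B_{i-1}^\chi$ complementing $A^\chi\cap W_{i-1}^\chi$ in $W_{i-1}^\chi$, the sum $(A^\chi\cap W_i^\chi)+ B_{i-1}^\chi$ is direct inside $W_i^\chi$, so one extends $B_{i-1}^\chi$ to $B_i^\chi$ by adjoining any complement of this subspace in $W_i^\chi$; a short direct check (if $x\in B_i^\chi\cap W_{i-1}^\chi$, decompose $x$ in $W_{i-1}^\chi$ and use $B_i^\chi\cap (A^\chi\cap W_i^\chi)=0$) gives $B_i^\chi\cap W_{i-1}^\chi = B_{i-1}^\chi$, whence $B^\chi := \bigcup_i B_i^\chi$ satisfies $B^\chi\cap W_i^\chi = B_i^\chi$ for all $i$.

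Finally I would assemble the complement. Define $U$ by its levels $U\cap W_i := \prod_\chi (B^\chi\cap W_i^\chi)$; these form an ascending chain of closed $\mathfrak{h}$-stable subspaces, so $U := \bigcup_i (U\cap W_i)$ is a closed $\mathfrak{h}$-stable subspace of $W$. By flag-compatibility the finite-dimensional splittings $W_i^\chi = (A^\chi\cap W_i^\chi)\oplus (B^\chi\cap W_i^\chi)$ assemble, for each $i$, into a topological direct product decomposition $W_i = (Z\cap W_i)\oplus (U\cap W_i)$ in the category of linearly compact spaces. Passing to the inductive limit, the canonical $\mathfrak{h}$-equivariant addition map $Z\oplus U \to W$, with $Z\oplus U$ the direct sum in $\mathcal{I}$, restricts to a topological isomorphism on each level $W_i$; it is therefore a continuous $\mathfrak{h}$-equivariant bijection of ind-linearly compact spaces, and by Lemma~\ref{lem:iso-ind}(a) it is an isomorphism in $\mathcal{I}$. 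This is the desired $\mathfrak{h}$-equivariant topological splitting.

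I expect the main obstacle to be exactly the phenomenon flagged in the first paragraph: since the global weight spaces $W^\chi$ are typically infinite-dimensional, an arbitrary algebraic complement need not extend to a \emph{closed} subspace giving a topological direct sum. The entire weight of the argument lies in choosing the $B^\chi$ compatibly with the exhausting flags $\{W_i^\chi\}_i$, so that the splitting of the dense weight part propagates to genuine topological splittings on each linearly compact level and survives passage to the inductive limit.
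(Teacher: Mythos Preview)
Your proof is correct and follows essentially the same strategy as the paper: filter $W$ by linearly compact $\mathfrak{h}$-stable levels $W_i$, each a product of finite-dimensional weight spaces, and build compatible complements $B_i^\chi$ of $(Z\cap W_i)^\chi$ in $W_i^\chi$ inductively in $i$, then assemble $U_i=\prod_\chi B_i^\chi$ and $U=\bigcup_i U_i$. The only noteworthy difference is that the paper first reduces to the case $W=\mathbf{V}^{p,q}$ (the reduction to subobjects and then quotients being straightforward), whereas you work directly with a general presentation $W=Z'/U'$; your added remarks on why global weight spaces can be infinite-dimensional and your explicit verification that $B_i^\chi\cap W_{i-1}^\chi=B_{i-1}^\chi$ are helpful elaborations of steps the paper leaves implicit.
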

	
	\begin{proof}	    
	    It is clear from Definition~\ref{def:tglv} that it is enough to consider the case when $W = \mathbf{V}^{p,q}$. Here, the filtration $V_i ^{\otimes p}\otimes (V^{\otimes q})^*$ of $\mathbf{V}^{p,q}$ induces an $\mathfrak{h}$-stable filtration by closed linearly compact subspaces 
	    $W = \bigcup_i W_i$ which, by Lemma~\ref{lem: weightpart}(a), are isomorphic to direct products of finite-dimensional weight spaces. Analogously, the closed subspace $Z$ has the induced filtration $Z = \bigcup_i Z_i$.  For every weight $\chi$ of $W$, one may choose compatible supplementary subspaces $(U_i)^\chi$ of $(Z_i)^\chi$ in $(W_i)^\chi$. Then, the subspaces $U_i := \prod_\chi (U_i)^\chi$ are closed, $\mathfrak{h}$-stable, and such that there are isomorphisms $Z_i \oplus U_i \cong W_i$ of topological $\mathfrak{h}$-modules. Consequently, denoting by $U$ the union of the $U_i$, it follows that $U$ is closed and $\mathfrak{h}$-stable in $W$, and the canonical morphism $Z\oplus U \rightarrow W$ is an isomorphism of topological $\mathfrak{h}$-modules. This concludes the proof.
	\end{proof}

	Below we show that taking the weight part yields an equivalence of categories  
	$$(\phantom{a})^{wt}: \mathbf{T}_{\mathfrak{gl}(V)} \rightarrow \mathbb{T}_{\mathfrak{gl}(\infty)}\ . $$
	As a first step, we have
	
	\begin{lem}
	    Taking the weight part is a functor  $(\phantom{a})^{wt}: \mathbf{T}_{\mathfrak{gl}(V)} \rightarrow \mathbb{T}_{\mathfrak{gl}(\infty)}$.
	\end{lem}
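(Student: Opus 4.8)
The plan is to verify the three things a functor requires: that $(\phantom{a})^{wt}$ sends an object of $\mathbf{T}_{\mathfrak{gl}(V)}$ to an object of $\mathbb{T}_{\mathfrak{gl}(\infty)}$, that it sends morphisms to morphisms, and that it respects composition and identities. Since on underlying spaces $(\phantom{a})^{wt}$ is simply restriction to the weight part, the last point is automatic, so the real content is the first two.

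First I would record that $\mathfrak{gl}(\infty)$ sits inside $\mathfrak{gl}(V)$ as the finitary endomorphisms, with Cartan subalgebra $\mathfrak{h}\cap\mathfrak{gl}(\infty)$ spanned by the $E_{ii}$, and that the weight part of any $\mathfrak{gl}(V)$-module $W$ is stable under this $\mathfrak{gl}(\infty)$. Indeed, for a root vector $E_{ij}$ with $i\neq j$ one has $[H,E_{ij}]=(\varepsilon_i-\varepsilon_j)(H)E_{ij}$ for $H\in\mathfrak{h}$, hence $E_{ij}W^\chi\subseteq W^{\chi+\varepsilon_i-\varepsilon_j}$, while the diagonal part $\mathfrak{h}\cap\mathfrak{gl}(\infty)$ preserves each $W^\chi$ by definition of a weight space; since the $E_{ij}$ and the $E_{ii}$ span $\mathfrak{gl}(\infty)$, this gives $\mathfrak{gl}(\infty)\cdot W^{wt}\subseteq W^{wt}$. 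For a morphism $f\colon W_1\to W_2$ in $\mathbf{T}_{\mathfrak{gl}(V)}$, the $\mathfrak{h}$-equivariance of $f$ gives $f(W_1^\chi)\subseteq W_2^\chi$, so $f$ restricts to a $\mathfrak{gl}(\infty)$-equivariant map $W_1^{wt}\to W_2^{wt}$, i.e.\ a morphism of $\mathbb{T}_{\mathfrak{gl}(\infty)}$; restriction of maps visibly respects composition and identities. It remains only to show that $W^{wt}$ is an object of $\mathbb{T}_{\mathfrak{gl}(\infty)}$.

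Writing $W=Z/U$ with $Z$ a closed $\mathfrak{gl}(V)$-stable subspace of $\bigoplus_k\mathbf{V}^{p_k,q_k}$ and $U\subseteq Z$ a closed $\mathfrak{gl}(V)$-subspace, Lemma~\ref{lem: weightpart}(a) gives $\bigl(\bigoplus_k\mathbf{V}^{p_k,q_k}\bigr)^{wt}=\bigoplus_k V^{p_k,q_k}$. Thus $Z^{wt}$, being contained in $Z$ and consisting of weight vectors, is a $\mathfrak{gl}(\infty)$-submodule of the finite direct sum $\bigoplus_k V^{p_k,q_k}$, and likewise for $U^{wt}$; each is therefore an object of $\mathbb{T}_{\mathfrak{gl}(\infty)}$. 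The statement will follow once I identify $W^{wt}=(Z/U)^{wt}$ with the subquotient $Z^{wt}/U^{wt}$, for then $W^{wt}$ is itself a subquotient of $\bigoplus_k V^{p_k,q_k}$.

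This identification is the one delicate step, and I expect it to be the main obstacle. Let $\pi\colon Z\to Z/U$ be the projection. Since $\pi$ is $\mathfrak{h}$-equivariant, $\pi(Z^{wt})\subseteq(Z/U)^{wt}$ and $\ker(\pi|_{Z^{wt}})=Z^{wt}\cap U=U^{wt}$, which already yields an injection $Z^{wt}/U^{wt}\hookrightarrow(Z/U)^{wt}$. The nontrivial point is surjectivity, namely that every weight vector of $Z/U$ lifts to a weight vector of $Z$; this is exactly what makes the weight part exact on quotients, and here I would invoke Lemma~\ref{T-split}. Applied to the closed $\mathfrak{h}$-stable subspace $U\subseteq Z$, it produces a closed $\mathfrak{h}$-stable complement $U'$ with $Z\cong U\oplus U'$ as topological $\mathfrak{h}$-modules, so that $\pi$ restricts to an $\mathfrak{h}$-equivariant topological isomorphism $U'\xrightarrow{\ \sim\ }Z/U$. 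Such an isomorphism carries weight spaces to weight spaces, so every weight vector of $Z/U$ is the image of a weight vector of $U'\subseteq Z^{wt}$, giving $\pi(Z^{wt})=(Z/U)^{wt}$ and hence the desired equality $W^{wt}=Z^{wt}/U^{wt}$ of $\mathfrak{gl}(\infty)$-modules. The remaining assertions are formal consequences of $\mathfrak{h}$- and $\mathfrak{gl}(V)$-equivariance.
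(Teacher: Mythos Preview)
Your proof is correct and follows essentially the same approach as the paper: both arguments use Lemma~\ref{lem: weightpart}(a) to identify the weight part of $\bigoplus_k\mathbf{V}^{p_k,q_k}$ with $\bigoplus_k V^{p_k,q_k}$, and both invoke Lemma~\ref{T-split} to obtain $W^{wt}\cong Z^{wt}/U^{wt}$. You supply more detail than the paper---in particular the explicit root-vector check that $\mathfrak{gl}(\infty)$ preserves weight parts, and the injection/surjection argument for $(Z/U)^{wt}=Z^{wt}/U^{wt}$---but the structure and key inputs are the same.
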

	\begin{proof}
	    By Lemma~\ref{lem: weightpart}, the weight part of $\mathbf{V}^{p,q}$ is $V^{p,q}$. Note that $V^{p,q}$ is not $\mathfrak{gl}(V)$-stable, but is stable by the dense Lie subalgebra $\mathfrak{gl}(\infty)$ consisting of finitary matrices with respect to the basis $\mathcal{V}$ of $V$. Let $W\cong Z/U$ be an object of $\mathbf{T}_{\mathfrak{gl}(V)}$, with $U\subset Z$ two closed $\mathfrak{gl}(V)$-stable subspaces of a finite direct sum  $\bigoplus_i \mathbf{V}^{p_i,q_i}$. Then the weight parts $U^{wt}$ and $Z^{wt}$ are $\mathfrak{gl}(\infty)$-stable subspaces of $\bigoplus_i V^{p_i,q_i}$. Furthermore, Lemma~\ref{T-split} implies that $W^{wt} \cong Z^{wt}/U^{wt}$. Thus the weight part $W^{wt}$ is an object of the category $\mathbb{T}_{\mathfrak{gl}(\infty)}$. Next, it is clear that morphisms in $\mathbf{T}_{\mathfrak{gl}(V)}$ restrict to $\mathfrak{gl}(\infty)$-equivariant morphisms between weight parts. This proves the statement.
	\end{proof}

	\begin{lem}\label{lem:full}
	The map 
	$$\Phi:\Hom_{\mathbf{T}_{\mathfrak{gl}(V)}}(\mathbf{V}^{p,q}, \mathbf{V}^{p', q'})
	\rightarrow
	\Hom_{\mathbb{T}_{\mathfrak{gl}(\infty)}}((\mathbf{V}^{p,q})^{wt}, (\mathbf{V}^{p', q'})^{wt})
	$$
	induced by the functor $(\phantom{a})^{wt}$, is an isomorphism of vector spaces.
	\end{lem}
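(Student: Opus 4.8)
The plan is to prove that $\Phi$ is both injective and surjective; since it is evidently $\CC$-linear, this makes it an isomorphism of vector spaces.

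\emph{Injectivity} is the easy half and follows from density. By Lemma~\ref{lem: weightpart}(b), the weight part $(\mathbf{V}^{p,q})^{wt}=V^{p,q}$ is dense in $\mathbf{V}^{p,q}$, and the target $\mathbf{V}^{p',q'}$ is Hausdorff, being ind-linearly compact. A morphism $f$ in $\mathbf{T}_{\mathfrak{gl}(V)}$ is by definition continuous, so if $\Phi(f)=f_{|V^{p,q}}=0$, then $f$ vanishes on a dense subspace and therefore $f(\mathbf{V}^{p,q})=f(\overline{V^{p,q}})\subseteq\overline{f(V^{p,q})}=\{0\}$. Hence $f=0$ and $\Phi$ is injective.

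\emph{Surjectivity} is the substantial half, amounting to fullness of $(\phantom{a})^{wt}$ on the generators. I would start from the structure theory of $\mathbb{T}_{\mathfrak{gl}(\infty)}$: by the first fundamental theorem of invariant theory for $\mathfrak{gl}(\infty)$ (see \cite{DPS}, \cite{SS}, \cite{PStyr}), the space $\Hom_{\mathbb{T}_{\mathfrak{gl}(\infty)}}(V^{p,q},V^{p',q'})$ is finite dimensional and is spanned by the ``diagram'' morphisms obtained by composing permutations of the $V$-tensorands and of the $V_\ast$-tensorands with contractions induced by the canonical pairing $V\otimes V_\ast\to\CC$; in particular it is nonzero only when $p-q=p'-q'$, $p\geq p'$ and $q\geq q'$. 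It then suffices to lift each such generator to a morphism of $\mathbf{T}_{\mathfrak{gl}(V)}$, i.e.\ to a continuous $\mathfrak{gl}(V)$-equivariant map $\mathbf{V}^{p,q}\to\mathbf{V}^{p',q'}$ whose restriction to weight parts is the given generator. Permutations of tensorands lift tautologically. A single contraction lifts to the partial-evaluation map $\mathbf{V}^{p,q}\to\mathbf{V}^{p-1,q-1}$ which inserts the selected $V$-tensorand into the selected argument of the dual tensorand $(V^{\otimes q})^\ast$: on this linearly compact factor the map is the transpose of the insertion $V^{\otimes(q-1)}\hookrightarrow V^{\otimes q}$ of the chosen tensorand into the selected slot, hence continuous between the linearly compact dual spaces, while on $V^{\otimes p}$ it is an algebraic operation on a discrete space. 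Composing and taking linear combinations, every generator lifts, and since $\Phi$ is linear and each lift restricts to its generator on weight parts, $\Phi$ is surjective. I would also stress that, although these lifts are assembled from $\mathfrak{gl}(\infty)$-data, they are automatically equivariant for the whole Lie algebra $\mathfrak{gl}(V)$ -- the very phenomenon flagged in the Introduction -- because both the tensor-factor permutations and the trace pairing $V\otimes\mathbf{V}\to\CC$ are $\mathfrak{gl}(V)$-invariant, so no passage to a completion of $\mathfrak{gl}(V)$ is required.

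The step I expect to be the main obstacle is surjectivity, and more precisely its genuine reliance on the diagram description of $\mathfrak{gl}(\infty)$-morphisms: an abstract $\mathfrak{gl}(\infty)$-equivariant map $V^{p,q}\to V^{p',q'}$ need not a priori extend to a continuous map on $\mathbf{V}^{p,q}$. This can be seen by forming the closure $\overline{\Gamma}$ of the graph $\Gamma\subset\mathbf{V}^{p,q}\oplus\mathbf{V}^{p',q'}$ of the given map. Applying Lemma~\ref{lem: weightpart}(b) to the direct sum (again a finite sum of mixed-tensor spaces) one finds $(\overline{\Gamma})^{wt}=\Gamma$, whence a weight-space argument shows that the first projection $\pi_1$ restricted to $\overline{\Gamma}$ is injective; combined with the open mapping theorem (Lemma~\ref{lem:iso-ind}) this realizes $\overline{\Gamma}$ as the graph of a continuous map defined on $\pi_1(\overline{\Gamma})$, a $\mathfrak{gl}(V)$-submodule containing the dense subspace $V^{p,q}$. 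The content of surjectivity is exactly that $\pi_1(\overline{\Gamma})=\mathbf{V}^{p,q}$, and this can fail for a merely $\mathfrak{gl}(\infty)$-equivariant map (for instance $V^{0,2}=V_\ast\otimes V_\ast$ generates only the finite-rank forms, which are dense but proper in $\mathbf{V}^{0,2}$). It is precisely the classification of $\mathfrak{gl}(\infty)$-morphisms as diagrams -- equivalently, the continuity of the contraction lifts constructed above -- that forces $\pi_1(\overline{\Gamma})$ to be all of $\mathbf{V}^{p,q}$, and hence guarantees the desired extension.
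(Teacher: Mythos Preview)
Your proof is correct and follows essentially the same approach as the paper: injectivity via density of the weight part, and surjectivity by invoking the description from \cite{DPS} of $\Hom_{\mathbb{T}_{\mathfrak{gl}(\infty)}}(V^{p,q},V^{p',q'})$ as spanned by compositions of permutations and contractions, each of which lifts to a continuous $\mathfrak{gl}(V)$-equivariant map on $\mathbf{V}^{p,q}$. Your explicit description of the contraction lift and the closing remark on why the diagram classification is genuinely needed are more detailed than the paper's terse ``clearly extend by continuity,'' but the argument is the same.
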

	
	\begin{proof}
	Since $(\mathbf{V}^{p,q})^{wt}$ is dense in $\mathbf{V}^{p,q}$, the map $\Phi$ is injective. Furthermore, by \cite{DPS} (see Lemma~6.1 and the text right above Lemma~6.1), the vector space $\Hom_{\mathbb{T}_{\mathfrak{gl}(\infty)}}((\mathbf{V}^{p,q})^{wt}, (\mathbf{V}^{p', q'})^{wt})$ is generated by compositions of contractions and permutations. These morphisms clearly extend  to $\mathbf{V}^{p,q}$ by continuity, and hence are in the image of the map $\Phi$. Thus $\Phi$ is also surjective.
	\end{proof}
	
	\begin{prop}\label{prop:strict}
	    Let $$W = \bigoplus_{k=1}^r \mathbf{V}^{p_k , q_k} \ \ \mathrm{and} 
	    \ \ W' = \bigoplus_{k'=1}^{r'} \mathbf{V}^{p'_{k'} , q'_{k'}}. $$
	    Then every $f\in\Hom_{\mathbf{T}_{\mathfrak{gl}(V)}}(W, W')$ is strict.
	    Moreover, if $U$ is a closed $\mathfrak{gl}(V)$-stable subspace of $W$, also the restriction $f_{|U}$ of $f$ to $U$ is strict.
	\end{prop}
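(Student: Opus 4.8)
The plan is to reduce, via Lemma~\ref{lem:strict}, to showing that $f$ (and each $f_{|U}$) has closed image. Since a finite direct sum of ind-linearly compact spaces is ind-linearly compact, $W$ and $W'$ are objects of $\mathcal{I}$ and $f$ is a morphism of $\mathcal{I}$, so I may argue entirely inside $\mathcal{I}$. I write $W=\bigcup_i W_i$ with $W_i=\bigoplus_k V_i^{\otimes p_k}\otimes (V^{\otimes q_k})^\ast$ for the linearly compact filtration, and similarly $W'=\bigcup_j W'_j$. The key feature of this filtration, read off from Lemma~\ref{lem: weightpart}(a), is that $W_j$ consists exactly of the vectors whose ``upper'' ($V$-)indices are $\le j$, the ``lower'' ($V^\ast$-)indices being unrestricted.

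First I record the soft facts. As $f$ is $\mathfrak{gl}(V)$-equivariant it is $\mathfrak{h}$-equivariant, hence preserves weight spaces; since weight modules are $\mathfrak{h}$-semisimple, $f(W^\chi)=M^\chi$ for every weight $\chi$, where $M:=f(W^{wt})$. Because $W^{wt}$ is dense in $W$ (Lemma~\ref{lem: weightpart}(b)) and $f$ is continuous, $\overline{f(W)}=\overline{M}=:N$, and by the bijection of Lemma~\ref{lem: weightpart}(b) one gets $N^{wt}=M$ and, for each $j$, $N\cap W'_j=\overline{(N\cap W'_j)^{wt}}$ with $(N\cap W'_j)^{wt}=M\cap W'_j$. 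Moreover, exactly as in the proof of Lemma~\ref{lem:iso-ind}, for each $j'$ the restriction $f_{|W_{j'}}$ carries $W_{j'}$ into some $W'_{j''}$ and is a continuous map of linearly compact spaces, so by Lemma~\ref{lem:lc}(\ref{closed2}) its image $f(W_{j'})$ is closed in $W'$. As $f(W)\subseteq N$ is clear, the entire statement reduces to proving the reverse inclusion $N\subseteq f(W)$.

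The heart of the matter is the claim that for every $j$ there is $j'\ge j$ with $M\cap W'_j\subseteq f(W_{j'})$. Granting it, closedness of $f(W_{j'})$ gives $N\cap W'_j=\overline{M\cap W'_j}\subseteq f(W_{j'})\subseteq f(W)$, and the union over $j$ yields $N\subseteq f(W)$. To prove the claim, let $\mathfrak{n}_{j'}\subseteq\mathfrak{gl}(\infty)$ be the subalgebra spanned by the matrix units $E_{cd}$ with $c\le j'$; it preserves $W_{j'}$ (it cannot raise an upper index above $j'$ and leaves lower indices free) and, lying inside $\mathfrak{gl}(V)$, it preserves every $\mathfrak{gl}(V)$-submodule. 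Since $M$ is a finite-length, hence finitely generated, $\mathfrak{gl}(\infty)$-module, I fix generators and lift them to $W_{i_0}$; then for $j'=\max(i_0,j)$ I claim that every weight vector of $M$ of upper support $\le j$ already lies in $U(\mathfrak{n}_{j'})\cdot\{\text{generators}\}$, whence $f$-equivariance and $\mathfrak{n}_{j'}$-stability of $W_{j'}$ place it in $f(W_{j'})$. The main obstacle is precisely this combinatorial claim: a given weight vector of $M$ has preimages in $W$ of arbitrarily large upper support, and one must show that, to reach the upper-support-$\le j$ part, no root vector raising an upper index past $j'$ is ever needed. I would establish this through the explicit description of Lemma~\ref{lem:full}: under $(\phantom{a})^{wt}$ the morphism $f$ corresponds to a matrix of compositions of permutations and contractions, a permutation alters no index, and a contraction $\mathbf{V}^{p,q}\to\mathbf{V}^{p-1,q-1}$ has a section on weight vectors given by inserting the small pair $v_1\otimes v_1^\ast$, so preimages can be built by inserting only the index $1$ and the upper support is not increased.

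Finally, for the ``moreover'' part I would run the identical argument with the closed submodule $U$ in place of $W$. By Lemma~\ref{lem:ind-linsubquot}, $U$ is ind-linearly compact with filtration $U_j=U\cap W_j$; its weight part $U^{wt}$ is dense in $U$ and $\mathfrak{gl}(\infty)$-stable, each $f(U_j)$ is closed as above, and $\mathfrak{n}_{j'}$ preserves $U_{j'}=U\cap W_{j'}$ because it preserves both $U$ and $W_{j'}$. Replacing $M$ by $M_U:=f(U^{wt})$ (again finite length, with generators lifted into $U_{i_0}$) and $N$ by $N_U:=\overline{f(U)}$, the same three ingredients — weight-space surjectivity, closedness of each $f(U_j)$, and the bounded-upper-support preimage carried out intrinsically inside the finite-length module $M_U$ — yield $f(U)=N_U$, so $f_{|U}$ is strict.
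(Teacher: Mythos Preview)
Your reduction is correct: once you know that each $f(W_{j'})$ is closed and that $N\cap W'_j=\overline{M\cap W'_j}$, everything hinges on the claim that for every $j$ there is $j'$ with $M\cap W'_j\subseteq f(W_{j'})$. The problem is your justification of this claim.

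The argument via Lemma~\ref{lem:full} does not go through. That lemma says $\Hom(\mathbf{V}^{p,q},\mathbf{V}^{p',q'})$ is \emph{spanned} by compositions of permutations and contractions, so each matrix entry of $f$ is a linear combination $\sum_\alpha c_\alpha f_\alpha$ of such compositions, not a single one. Your section ``insert $v_1\otimes v_1^\ast$'' inverts an individual $f_\alpha$, but it does not invert $f$: if $m=f(w)\in M\cap W'_j$ then $m=\sum_\alpha c_\alpha f_\alpha(w)$, and the individual terms $f_\alpha(w)$ have upper support governed by that of $w$, which is a priori unbounded, so applying any $s_\alpha$ to them does not land in $W_{j'}$. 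Nor does this section construction address your alternative $U(\mathfrak{n}_{j'})$-generation statement, which is a claim purely about the module $M$ and makes no reference to the shape of $f$; you assert it but do not prove it.

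The paper supplies the missing mechanism with a tool you do not invoke: the action of the finitary symmetric group $S=\bigcup_n S_n$ on $W$ and $W'$. From Lemma~\ref{lem:full} one deduces that $f$ is $S$-equivariant. The subgroup $S'\subset S$ fixing $[1,j]$ pointwise stabilizes $W'_j$ and permutes its weights with only finitely many orbits, each containing a weight supported in $[1,j+q']$. Since each weight space $(Z_j)^\chi$ is finite dimensional, one chooses a single $i$ with $(Z_j)^\chi=(Z_{j,i})^\chi$ for one representative $\chi$ from each orbit; this equality is then transported to every weight in the orbit using $S'$-equivariance together with the operators $E_{k,l}$ for $k<l$, which preserve both $W_i$ and $W'_j$. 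The same argument applies to a closed $\mathfrak{gl}(V)$-stable $U\subset W$, since such $U$ is automatically $S$-stable by Lemma~\ref{lem: weightpart}(b). The finitary symmetric group, rather than finite generation or explicit sections, is what turns the problem into a finite check.
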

	
	\begin{proof}
	Let $f\in\Hom_{\mathbf{T}_{\mathfrak{gl}(V)}}(W, W')$ and let $Z$ denote the image of $f$. By Lemma~\ref{lem:strict}, one must prove that $Z$ is a closed subspace of $W'$. 
	
	Let $W'_j = \bigoplus_{k'=1}^{r'} V_j ^{\otimes p'_k}\otimes (V^{\otimes q'_k})^*$ and $W_i = \bigoplus_{k=1}^r V_i ^{\otimes p_k}\otimes (V^{\otimes q_k})^*$. The topologies on $W$ and $W'$ are the inductive limit topologies on $\bigcup_i W_i$ and $\bigcup_j W'_j$ respectively. Thus, if $Z_j = Z \cap W'_j$, one has $Z = \bigcup_j Z_j$; moreover $Z$ is closed in $W'$ if and only if, for every $j$, the subspace $Z_j$ is closed in $W'_j$.
	
	Let us fix $j$ and prove that $Z_j$ is closed in $W'_j$. Set $Z_{j, i}:=f(W_i)\cap W'_j$. Then $Z_j = \bigcup_i Z_{j, i}$, and, by Lemma~\ref{lem:lc}(\ref{union2}), the subspace $Z_j$ is closed in $W'_j$ if and only if there exists $i$ such that $Z_{j, i} = Z_j$.
	
	We now show that there is $i$ such that $Z_{j, i} = Z_j$, thus proving the proposition. By Lemma~\ref{lem: weightpart}(a), the linearly compact vector spaces $W_i$ and $W'_j$ are such that, for any $\chi\in\mathfrak{h}^\ast$, the $\chi$-weight space is finite dimensional and
	$$
	\overline{Z_j}= \prod_\chi (Z_j)^\chi  \ \ \mathrm{and} \ \ Z_{j, i} =\prod_\chi (Z_{j, i})^\chi.
	$$
	
	Let $S$ denote the finitary symmetric group on countably many letters, i.e., $S=\bigcup_n S_n$.
	Then $S$ acts contragrediently on $\mathbf{V} = V^\ast$, thus also on the spaces $\mathbf{V}^{p, q}$, and hence on $W$ and $W'$. Lemma~\ref{lem:full} and \cite[Lemma 6.1]{DPS} imply that the map $f$ is $S$-equivariant. Observe that $W'_j$ is stable under the action of the subgroup $S'$ of $S$ which  fixes pointwise the set $[1,j]$. Furthermore, since $S$ normalizes $\mathfrak{h}$, it follows that the action of $S'$ permutes the weights and, accordingly, the respective weight spaces. Note that the weights $\chi$ for which $(W'_j)^{\chi}\neq 0$ form finitely many $S'$-orbits and each orbit contains a weight of $V_j ^{\otimes p}\otimes (V_{j+q}^{\ast})^{\otimes q}$. Let $i$ be such that, for every weight $\chi$ of 
	$V_j ^{\otimes p}\otimes (V_{j+q}^{\ast})^{\otimes q}$, one has $(Z_j)^\chi = (Z_{j, i})^\chi$.
	
	Let us show that this implies $(Z_j)^\psi = (Z_{j, i})^\psi$ for any weight $\psi$, and thus ultimately $Z_j = Z_{j, i}$. Indeed, let $E_{k,l}\in\mathfrak{gl}(V)$ be the endomorphism sending $v_l$ to $v_k$, and sending all other basis vectors to $0$. Observe that if $k<l$, one has $E_{k,l}(W'_j)\subset W'_j$ and, for every $i$, $E_{k,l}(W_i)\subset W_i$; since $f$ is $\mathfrak{gl}(V)$-equivariant, one has also $E_{k,l}(Z_{j,i})\subset Z_{j,i}$.
	
	Let us proceed by contradiction. Suppose $\psi$ is a weight for which 
	$(Z_j)^\psi \neq (Z_{j, i})^\psi$. Then $\psi = \sum a_l \varepsilon_l$ and we may suppose that $\psi$ is such that in its decomposition appear the minimum number of $\varepsilon_l$ with $l>j+q$. By the hypothesis on $i$, there is at least one $l>j+q$ such that $a_l > 0$. In this case, there is also at least one $k$ such that $j < k \leq j+q$ and $a_k = 0$. 
	
	Let $\sigma\in S'$ be the permutation exchanging $k$ and $l$. Let $\theta = \ \sigma ( \psi )$. By $S$-equivariance, one has $\dim(Z_j^{\theta}) = \dim(Z_j^{\psi})$. Furthermore, one checks that the operator $E_{k,l}^{a_k}$ sends isomorphically $(W'_j)^{\theta}$ to $(W'_j)^{\psi}$; by $\mathfrak{gl}(V)$-equivariance of $f$, it also sends isomorphically $(Z_j)^{\theta}$ to $(Z_j)^{\psi}$ and $(Z_{j, i})^{\theta}$ to $(Z_{j, i})^{\psi}$. Since in the decomposition of $\theta$ there appears one less $\varepsilon_i$ with $i>j+q$ and $a_i \neq 0$, it follows that $(Z_j)^{\theta}=(Z_{j, i})^{\theta}$ and thus also $(Z_j)^{\psi}=(Z_{j, i})^{\psi}$. The contradiction proves that $Z_j = Z_{j, i}$, and thus $f$ is strict.
	
	Let now $U$ be a closed $\mathfrak{gl}(V)$-stable subspace of $W$. Since by Lemma~\ref{lem: weightpart} one has $U = \overline{U^{wt}}$, it follows that $U$ is $S$-stable. If $U_i = U\cap W_i$, then $U=\bigcup_i U_i$ is a presentation of $U$ as inductive limit of linearly compact closed $\mathfrak{h}$-submodules. The above arguments can be repeated with $Z=f(U)$, $Z_j = Z\cap W'_j$, $Z_{j,i}=f(U_i)\cap Z_j$, to prove that $f_{|U}$ has closed image, and hence is strict.
	
	This concludes the proof.
	\end{proof}
	
	\begin{prop}\label{prop:sub}
	Every object of $\mathbf{T}_{\mathfrak{gl}(V)}$ is isomorphic to a subobject of a finite direct sum of $\mathbf{V}^{p,q}$-s.
	\end{prop}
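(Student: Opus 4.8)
The plan is to reduce the statement to the corresponding fact in $\mathbb{T}_{\mathfrak{gl}(\infty)}$ at the level of weight parts, and then transport the resulting embedding back to the topological category using the lifting of morphisms between generators (Lemma~\ref{lem:full}) together with the strictness results of Proposition~\ref{prop:strict}. By Definition~\ref{def:tglv} I may write an arbitrary object as $W = Z/U$ with $U \subseteq Z$ closed $\mathfrak{gl}(V)$-stable subspaces of $W_0 := \bigoplus_{k} \mathbf{V}^{p_k,q_k}$. Passing to weight parts, $Z^{wt}/U^{wt}$ is an object of $\mathbb{T}_{\mathfrak{gl}(\infty)}$ by Lemma~\ref{lem: weightpart} and Lemma~\ref{T-split}. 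I intend to use two facts from the structure theory of $\mathbb{T}_{\mathfrak{gl}(\infty)}$ (see \cite{DPS}): the tensor modules $(\mathbf{V}^{p,q})^{wt}=V^{\otimes p}\otimes V_\ast^{\otimes q}$ are injective in $\mathbb{T}_{\mathfrak{gl}(\infty)}$, and every object embeds into a \emph{finite} direct sum of such modules. Applying the latter to $Z^{wt}/U^{wt}$ and precomposing with the projection $Z^{wt}\twoheadrightarrow Z^{wt}/U^{wt}$ yields a morphism $g\colon Z^{wt}\to W_1^{wt}$ of $\mathbb{T}_{\mathfrak{gl}(\infty)}$ with $\ker g = U^{wt}$, where $W_1 := \bigoplus_{l} \mathbf{V}^{p_l,q_l}$ is a finite direct sum of generators.

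The key step is to realize $g$ as the weight part of a morphism in $\mathbf{T}_{\mathfrak{gl}(V)}$. Since Lemma~\ref{lem:full} lifts only morphisms between generators, I first extend $g$ to the ambient module: each component $g_l\colon Z^{wt}\to (\mathbf{V}^{p_l,q_l})^{wt}$ extends, by injectivity of the target and the inclusion $Z^{wt}\hookrightarrow W_0^{wt}$, to a morphism $G_l\colon W_0^{wt}\to (\mathbf{V}^{p_l,q_l})^{wt}$; assembling gives $G\colon W_0^{wt}\to W_1^{wt}$ with $G|_{Z^{wt}}=g$. Now $G$ is a morphism between finite direct sums of generators, so by Lemma~\ref{lem:full} it lifts to a continuous $\mathfrak{gl}(V)$-equivariant map $\widehat G\colon W_0\to W_1$ with $(\widehat G)^{wt}=G$. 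I then set $f:=\widehat G|_Z\colon Z\to W_1$.

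It remains to check that $f$ induces a closed embedding $Z/U\hookrightarrow W_1$. Since $\widehat G$ is $\mathfrak{h}$-equivariant and continuous, its weight part restricted to $Z$ is $g$, so the kernel $K=\ker f$ is a closed $\mathfrak{gl}(V)$-stable subspace of $W_0$ with $K^{wt}=K\cap Z^{wt}=\ker g=U^{wt}$. By Lemma~\ref{lem: weightpart} (the bijection between closed $\mathfrak{h}$-stable subspaces and their weight parts, applied componentwise on $W_0$) one gets $K=\overline{K^{wt}}=\overline{U^{wt}}=U$, so $f$ factors as an injection $\bar f\colon Z/U\hookrightarrow W_1$. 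Proposition~\ref{prop:strict}, in its restriction form applied to $\widehat G$ and the closed subspace $Z$, shows that $f$ has closed image, hence so does $\bar f$. Being injective with closed image, $\bar f$ is a continuous bijection onto its image, which is ind-linearly compact by Lemma~\ref{lem:ind-linsubquot}; thus Lemma~\ref{lem:iso-ind} makes $\bar f$ an isomorphism in $\mathcal{I}$ onto a closed $\mathfrak{gl}(V)$-stable subspace of $W_1$, exhibiting $Z/U$ as a subobject of $\bigoplus_l \mathbf{V}^{p_l,q_l}$.

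The main obstacle is exactly the transfer from $\mathbb{T}_{\mathfrak{gl}(\infty)}$ to $\mathbf{T}_{\mathfrak{gl}(V)}$: one cannot lift the embedding of $Z^{wt}/U^{wt}$ directly, because the lifting statement is available only for morphisms between the generators $\mathbf{V}^{p,q}$. Overcoming this is the reason for the extension-by-injectivity trick, which moves $g$ to a map defined on the full ambient sum $W_0^{wt}$ before lifting; one then relies on the strictness of restrictions in Proposition~\ref{prop:strict} to ensure that the lifted map has closed image and therefore produces a genuine subobject rather than merely a continuous injection with dense image.
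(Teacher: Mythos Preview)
Your proof is correct and follows essentially the same approach as the paper: both start from $W=Z/U$, pass to weight parts, embed $Z^{wt}/U^{wt}$ into a finite direct sum of tensor modules using \cite[Proposition~4.5]{DPS}, extend the resulting map to the ambient sum via injectivity, lift through Lemma~\ref{lem:full}, and then invoke Proposition~\ref{prop:strict} (restriction form) together with Lemma~\ref{lem:iso-ind} to conclude. Your write-up is in fact slightly more explicit than the paper's in verifying $\ker f = U$ via the closure--weight-part bijection of Lemma~\ref{lem: weightpart}(b).
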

	\begin{proof}
	Let $W = Z/U$ where $U\subset Z$ are closed $\mathfrak{gl}(V)$-equivariant subspaces of a finite direct sum $M = \bigoplus_j \mathbf{V}^{p_i,q_i} $. It follows from Lemma~\ref{T-split} that $W^{wt}\cong Z^{wt}/U^{wt}$. Furthermore, by \cite[Proposition 4.5]{DPS}, any object of $\mathbb{T}_{\mathfrak{gl}(\infty)}$ is isomorphic to a subobject of an injective object of the form $\bigoplus_j V^{r_j,s_j} $. Let $\varphi: W^{wt}\rightarrow A$ be a $\mathfrak{gl}(V)$-equivariant isomorphism, where $A$ is a $\mathfrak{gl}(V)$-stable subspace of 
	$( M' ) ^{wt}= \bigoplus_j V^{r_j,s_j} $  for $M' = \bigoplus_j \mathbf{V}^{r_j,s_j} $. Let $Z'$ be the closure of $A$ in $M'$. Moreover, by the injectivity of $( M' ) ^{wt} $ and Lemma~\ref{lem:full}, it follows that $\varphi$ is the restriction of a $\mathfrak{gl}(V)$-equivariant map $\Tilde{\varphi}: M^{wt} \rightarrow (M')^{wt}$ which extends to a 
	$\mathfrak{gl}(V)$-equivariant map $f: M \rightarrow M'$. By Proposition~\ref{prop:strict}, one has that $f(Z)$ is closed, and is thus equal to $Z'$. Now $f$ induces a bijective continuous linear map between $W$ and $Z'$, which is a topological isomorphism by Lemma~\ref{lem:iso-ind}(a). This proves the proposition.
	\end{proof}
	
	\subsection{The category $\widehat{{\mathbf T}}_{\mathfrak{gl}(V)}$ and duality}
	Finally, we define the category of topological vector representations $\widehat{{\mathbf T}}_{\mathfrak{gl}(V)}$ as the dual to the category $\mathbf{T}_{\mathfrak{gl}(V)}$.
	
	\begin{defn}
		The objects of the category $\widehat{{\mathbf T}}_{\mathfrak{gl}(V)}$ are topological vector spaces of the form $Z/U$, where $Z$ is a closed $\mathfrak{gl}(V)$-stable subspace of a finite direct sum of topological vector spaces of the form $\widehat{\mathbf{V}}^{p,q}$, and $U$ is a closed $\mathfrak{gl}(V)$-subspace of $Z$. The morphisms in the category $\widehat{\mathbf{T}}_{\mathfrak{gl}(V)}$ are $\mathfrak{gl}(V)$-equivariant continuous linear maps.
	\end{defn}

	\begin{thm}\label{thm:main}
	The following statements hold:
	\begin{enumerate}[(i)]
	    \item \label{uno}
	    The categories $\mathbf{T}_{\mathfrak{gl}(V)}$ and $\widehat{{\mathbf T}}_{\mathfrak{gl}(V)}$ are abelian subcategories of $\mathcal{I}$ and $\mathcal{P}$ respectively.
	    \item \label{due}
	    The duality between $\mathcal{I}$ and $\mathcal{P}$ restricts to a duality between $\mathbf{T}_{\mathfrak{gl}(V)}$ and $\widehat{{\mathbf T}}_{\mathfrak{gl}(V)}$.
	    \item \label{tre}
	    The weight part functor
	    $$
	    (\phantom{a})^{wt}: \mathbf{T}_{\mathfrak{gl}(V)} \rightarrow \mathbb{T}_{\mathfrak{gl}(\infty)}
	    $$
	    is an equivalence of abelian categories.
	\end{enumerate}
	\end{thm}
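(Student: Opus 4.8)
The three parts reinforce one another, and the plan is to prove them in the order (\ref{uno}), (\ref{due}), (\ref{tre}), drawing on the technical results already established. For part (\ref{uno}), I first observe that every object of $\mathbf{T}_{\mathfrak{gl}(V)}$ is by construction a closed $\mathfrak{gl}(V)$-subquotient of a finite direct sum $M = \bigoplus_k \mathbf{V}^{p_k,q_k}$, and each $\mathbf{V}^{p,q}$ is ind-linearly compact; by Lemma~\ref{lem:ind-linsubquot} closed subspaces and Hausdorff quotients of ind-linearly compact spaces are again ind-linearly compact, so $\mathbf{T}_{\mathfrak{gl}(V)}$ is a full subcategory of $\mathcal{I}$. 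The essential point is to upgrade the quasi-abelian structure of $\mathcal{I}$ (Proposition~\ref{prop:quasiab}) to a genuinely abelian structure on $\mathbf{T}_{\mathfrak{gl}(V)}$. By Lemma~\ref{lem:strict}, in $\mathcal{I}$ a morphism is strict exactly when it has closed image; thus the obstruction to $\mathbf{T}_{\mathfrak{gl}(V)}$ being abelian is precisely the failure of arbitrary morphisms to be strict. But Proposition~\ref{prop:strict} says every morphism between finite direct sums of $\mathbf{V}^{p,q}$-s, and every restriction thereof to a closed $\mathfrak{gl}(V)$-stable subspace, \emph{is} strict. Combining this with Proposition~\ref{prop:sub}, which realizes every object as a closed subobject of some $M$, I conclude that every morphism in $\mathbf{T}_{\mathfrak{gl}(V)}$ is strict, so all coimage-to-image maps are isomorphisms; this is exactly the condition that turns the quasi-abelian category into an abelian one, with kernels the closed embeddings and cokernels the continuous surjections inherited from $\mathcal{I}$. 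The statement for $\widehat{\mathbf{T}}_{\mathfrak{gl}(V)}\subset\mathcal{P}$ then follows by the duality of Proposition~\ref{prop:duals}, once part (\ref{due}) is in hand.

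Part (\ref{due}) is essentially bookkeeping on top of Proposition~\ref{prop:duals}. The continuous-dual functor carries $\mathcal{I}$ to $\mathcal{P}$ and back, and by the very definition $\widehat{\mathbf{V}}^{p,q} := (\mathbf{V}^{q,p})^\ast$, so the generating objects of the two categories correspond under duality. I would check that the dual of a closed $\mathfrak{gl}(V)$-stable subspace is a continuous $\mathfrak{gl}(V)$-quotient and vice versa, using that duality sends strict morphisms to strict morphisms (Proposition~\ref{prop:duals}) together with Proposition~\ref{prop:strict}; hence a subquotient presentation $Z/U$ of an object of $\mathbf{T}_{\mathfrak{gl}(V)}$ dualizes to a subquotient presentation of an object of $\widehat{\mathbf{T}}_{\mathfrak{gl}(V)}$. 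Since the $\mathfrak{gl}(V)$-action dualizes correctly (the contragredient action is continuous), the restriction of the duality $\mathcal{I}\leftrightarrow\mathcal{P}$ lands in the prescribed subcategories and is a quasi-inverse pair there.

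Part (\ref{tre}) is the crux, and the hard part will be establishing that the weight-part functor is an equivalence rather than merely a faithful functor. Faithfulness and the functor's well-definedness are already recorded in the preceding lemmas. For fullness, the key input is Lemma~\ref{lem:full}, which gives a bijection on Hom-spaces between the generating objects $\mathbf{V}^{p,q}$; I would bootstrap this to arbitrary objects by using Proposition~\ref{prop:sub} to embed any object as a closed subobject of a finite direct sum of $\mathbf{V}^{p,q}$-s, then transport a given $\mathfrak{gl}(\infty)$-morphism of weight parts along these embeddings, extend it by Lemma~\ref{lem:full} to a morphism of the ambient direct sums, and invoke Proposition~\ref{prop:strict} to ensure the relevant images are closed so that the extension restricts to a genuine $\mathbf{T}_{\mathfrak{gl}(V)}$-morphism between the subobjects. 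For essential surjectivity, I would combine the identification $(\mathbf{V}^{p,q})^{wt}=V^{p,q}$ from Lemma~\ref{lem: weightpart} with the fact that every object of $\mathbb{T}_{\mathfrak{gl}(\infty)}$ is a subquotient of a finite direct sum of $V^{p,q}$-s; given such a subquotient, take closures inside the corresponding $\mathbf{V}^{p,q}$-s (using the closed/$\mathfrak{h}$-stable-subspace bijection of Lemma~\ref{lem: weightpart}(b)) to produce an object of $\mathbf{T}_{\mathfrak{gl}(V)}$ whose weight part recovers it. The main obstacle throughout is the interaction between the topology and the $\mathfrak{gl}(V)$-action: one must repeatedly confirm that passing between an object and its weight part commutes with forming kernels, cokernels, and subquotients, and this is exactly where Lemma~\ref{T-split} (the $\mathfrak{h}$-equivariant splitting, giving $W^{wt}\cong Z^{wt}/U^{wt}$) and the density statement of Lemma~\ref{lem: weightpart}(b) do the decisive work, guaranteeing that the weight-part functor is exact and reflects the abelian structure faithfully.
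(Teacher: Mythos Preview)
Your plan follows essentially the same route as the paper's proof, and the overall architecture---strictness via Proposition~\ref{prop:strict} for (\ref{uno}), transport along Proposition~\ref{prop:duals} for (\ref{due}), and Lemma~\ref{lem:full} plus density for (\ref{tre})---is exactly right. The one step that needs sharpening is in part (\ref{uno}): having $W\hookrightarrow M$ and $W'\hookrightarrow M'$ via Proposition~\ref{prop:sub}, together with the fact that restrictions of morphisms $M\to M'$ to closed $\mathfrak{gl}(V)$-stable subspaces are strict (Proposition~\ref{prop:strict}), does not by itself show that an arbitrary morphism $f\colon W\to W'$ is strict---you must first argue that $f$, viewed as a map $W\to M'$, actually \emph{is} the restriction of some morphism $M\to M'$. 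The paper fills this gap by invoking the injectivity of $(M')^{wt}$ in $\mathbb{T}_{\mathfrak{gl}(\infty)}$ (\cite[Proposition~4.5]{DPS}) together with Lemma~\ref{lem:full}, which is precisely the extension trick you already describe for part (\ref{tre}); you should invoke it here as well. With that adjustment your argument matches the paper's, including the treatment of essential surjectivity (the paper phrases it as ``$A$ is the kernel of a map between injectives, hence $A=(\ker\psi)^{wt}$'', which is equivalent to your closure approach via Lemma~\ref{lem: weightpart}(b)).
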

	\begin{proof}
	Proposition~\ref{prop:quasiab} shows that $\mathbf{T}_{\mathfrak{gl}(V)}$ and $\widehat{{\mathbf T}}_{\mathfrak{gl}(V)}$ are subcategories, respectively, of the quasi-abelian categories $\mathcal{I}$ and $\mathcal{P}$ which are closed under taking kernels and cokernels. Thus they inherit the quasi-abelian structure. Furthermore, a linear map $f$ is $\mathfrak{gl}(V)$-equivariant if and only if its dual $f^\ast$ is $\mathfrak{gl}(V)$-equivariant. This implies that the duality stated in Proposition~\ref{prop:duals} restricts to a duality between the quasi-abelian categories $\mathbf{T}_{\mathfrak{gl}(V)}$ and $\widehat{{\mathbf T}}_{\mathfrak{gl}(V)}$, and hence proves 
	(\ref{due}).
	
	To prove (\ref{uno}), observe that a quasi-abelian category is abelian if and only if every morphism is strict (see \cite{Buh}, Remark~4.7 and Remark~4.9). By duality, it suffices to prove that every morphism in $\mathbf{T}_{\mathfrak{gl}(V)}$ is strict. By Lemma~\ref{lem:strict}, this amounts to showing that every morphism of $\mathbf{T}_{\mathfrak{gl}(V)}$ has closed image. 
	
	Let $f:W\rightarrow W'$ be a morphism in $\mathbf{T}_{\mathfrak{gl}(V)}$. By Proposition~\ref{prop:sub}, one may suppose that $W$ and $W'$ are submodules of respective modules $M$ and $M'$ which are finite direct sums of $\mathbf{V}^{p,q}$-s. By \cite[Proposition~4.5]{DPS} and Lemma~\ref{lem:full}, one gets that the $\mathfrak{gl}(V)$-equivariant continuous linear maps from $W$ to $W'$ are restrictions of $\mathfrak{gl}(V)$-equivariant continuous linear maps from $M$ to $M'$, and such restrictions are strict by Proposition~\ref{prop:strict}. Thus (\ref{uno}) is proved.
	
	Let us prove (\ref{tre}). We keep the notations $W, W', M, M'$ from (i). Then one has a commutative diagram
	$$
	\begin{CD}
	\Hom_{\mathbf{T}_{\mathfrak{gl}(V)}}(M, M')@>{a}>> \Hom_{\mathbf{T}_{\mathfrak{gl}(V)}}(W, M') \\
	@V{c}VV @V{d}VV \\
	\Hom_{\mathbb{T}_{\mathfrak{gl}(\infty)}}(M^{wt}, (M')^{wt})@>{b}>> \Hom_{\mathbb{T}_{\mathfrak{gl}(\infty)}}(W^{wt}, (M')^{wt})
	\end{CD}
	$$
	The map $c$ is bijective by Lemma~\ref{lem:full}; the map $b$ is surjective by injectivity of $(M')^{wt}$ (\cite[Proposition~4.5]{DPS}). It follows that $d$ is surjective.
	Since $W^{wt}$ is dense in $W$, the map $d$ is also injective, and hence $d$ is an isomorphism.
	
	Assume $W'$  is the kernel of a map 
	$g\in \Hom_{\mathbf{T}_{\mathfrak{gl}(V)}}(M', N')$, where $N'$ is a finite direct sum of $\mathbf{V}^{p,q}$-s. This leads to the commutative diagram
	$$
	\begin{CD}
	0 @>>>\Hom_{\mathbf{T}_{\mathfrak{gl}(V)}}(W, W')@>>> \Hom_{\mathbf{T}_{\mathfrak{gl}(V)}}(W, M') @>>> \Hom_{\mathbf{T}_{\mathfrak{gl}(V)}}(W, N')\\
	@. @VVV @V{d}VV  @V{\cong}VV\\
	0 @>>>\Hom_{\mathbb{T}_{\mathfrak{gl}(\infty)}}(W^{wt}, (W')^{wt})@>>> \Hom_{\mathbb{T}_{\mathfrak{gl}(\infty)}}(W^{wt}, (M')^{wt}) @>>> \Hom_{\mathbb{T}_{\mathfrak{gl}(\infty)}}(W^{wt}, (N')^{wt})
	\end{CD}
	$$
	where the rows are exact and the middle vertical arrow is the map $d$ from above. We have established that $d$ is an isomorphism, and in the same way one can establish that the third vertical arrow is an isomorphism. It thus follows that the first vertical arrow is also an isomorphism. This proves that the functor $(\phantom{a})^{wt}$ is fully faithful. 
	
	Let us prove the essential surjectivity of the functor $(\phantom{a})^{wt}$. Let $A$ be an object of $\mathbb{T}_{\mathfrak{gl}(\infty)}$. By \cite[Proposition~4.5]{DPS}, $A$ may be assumed to be the kernel of a map $\phi\in \Hom_{\mathbb{T}_{\mathfrak{gl}(\infty)}}((M'')^{wt}, (M''')^{wt})$, where $M'', M'''$ are finite direct sums of $\mathbf{V}^{p,q}$-s. By Lemma~\ref{lem:full}, the map $\phi$ is the restriction of a unique map $\psi\in \Hom_{\mathbf{T}_{\mathfrak{gl}(V)}}(M'', M''')$. One has then $(\mathrm{ker}\psi)^{wt}=A$; thus the functor $(\phantom{a})^{wt}$ is essentially surjective. This concludes the proof of (\ref{tre}).
	\end{proof}

	\begin{cor} \label{quot}
	    Every object of $\widehat{\mathbf{T}}_{\mathfrak{gl}(V)}$ is isomorphic to a quotient of a finite direct sum of $\widehat{\mathbf{V}}^{p,q}$-s.
	\end{cor}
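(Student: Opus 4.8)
The plan is to dualize Proposition~\ref{prop:sub} using the duality between $\mathbf{T}_{\mathfrak{gl}(V)}$ and $\widehat{\mathbf{T}}_{\mathfrak{gl}(V)}$ established in Theorem~\ref{thm:main}(\ref{due}). The key bookkeeping observation is that, by the very definition $\widehat{\mathbf{V}}^{p,q} := (\mathbf{V}^{q,p})^\ast$, the continuous dual of $\mathbf{V}^{p,q}$ is $(\mathbf{V}^{p,q})^\ast = \widehat{\mathbf{V}}^{q,p}$; hence the dual of a finite direct sum of $\mathbf{V}^{p,q}$-s is again a finite direct sum of $\widehat{\mathbf{V}}^{p,q}$-s, merely with the two indices interchanged.

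First I would start with an arbitrary object $\widehat{W}$ of $\widehat{\mathbf{T}}_{\mathfrak{gl}(V)}$. Since the duality of Theorem~\ref{thm:main}(\ref{due}) is an antiequivalence with quasi-inverse given by the same continuous-dual functor, there is an object $W$ of $\mathbf{T}_{\mathfrak{gl}(V)}$ with $\widehat{W} \cong W^\ast$; concretely one takes $W = \widehat{W}^\ast$, so that $W^\ast \cong \widehat{W}$. By Proposition~\ref{prop:sub}, the object $W$ is isomorphic to a closed $\mathfrak{gl}(V)$-stable subspace of a finite direct sum $M = \bigoplus_{k=1}^r \mathbf{V}^{p_k, q_k}$; that is, there is a closed embedding $\iota: W \hookrightarrow M$, which by Proposition~\ref{prop:quasiab} is a kernel, hence a strict monomorphism in the abelian category $\mathbf{T}_{\mathfrak{gl}(V)}$.

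Next I would apply the duality functor to $\iota$. Being an exact contravariant equivalence of abelian categories (Theorem~\ref{thm:main}(\ref{uno}) and (\ref{due})), it carries the monomorphism $\iota$ to an epimorphism $\iota^\ast: M^\ast \to W^\ast$. By Proposition~\ref{prop:quasiab}, the epimorphisms in $\widehat{\mathbf{T}}_{\mathfrak{gl}(V)}$ are precisely the continuous surjections, so $\iota^\ast$ exhibits $W^\ast \cong \widehat{W}$ as a quotient of $M^\ast$. Finally, using the identity above,
$$
M^\ast = \bigoplus_{k=1}^r (\mathbf{V}^{p_k, q_k})^\ast = \bigoplus_{k=1}^r \widehat{\mathbf{V}}^{q_k, p_k}
$$
is a finite direct sum of $\widehat{\mathbf{V}}^{p,q}$-s, which yields the desired presentation of $\widehat{W}$ as a quotient.

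I do not expect any serious obstacle: the statement is the formal dual of Proposition~\ref{prop:sub}, and the only two points requiring care are that the duality interchanges closed embeddings (kernels) with surjections (cokernels) --- which follows from the exactness of the antiequivalence in Theorem~\ref{thm:main} together with the explicit descriptions of kernels and cokernels in Proposition~\ref{prop:quasiab} --- and the index-swapping identity $(\mathbf{V}^{p,q})^\ast = \widehat{\mathbf{V}}^{q,p}$.
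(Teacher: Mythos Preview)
Your proof is correct and follows exactly the same approach as the paper, which simply states that the corollary ``follows from Theorem~\ref{thm:main}(\ref{due}) and Proposition~\ref{prop:sub}.'' You have merely unpacked the details of this duality argument carefully, including the index swap $(\mathbf{V}^{p,q})^\ast = \widehat{\mathbf{V}}^{q,p}$.
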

	\begin{proof}
	    Follows from Theorem~\ref{thm:main}(\ref{due}) and Proposition~\ref{prop:sub}.
	\end{proof}

	\section{Monoidal structure and further properties}
	
	We now endow the categories $\widehat{\mathbf{T}}_{\mathfrak{gl}(V)}$ and $\mathbf{T}_{\mathfrak{gl}(V)}$ with symmetric monoidal structures. We use Beilinson's tensor product operations between topological vector spaces introduced in \cite{Bei}.
	We recall these operations here, following Positselski's notation \cite{Po}, and show that they endow the categories $\widehat{\mathbf{T}}_{\mathfrak{gl}(V)}$ and $\mathbf{T}_{\mathfrak{gl}(V)}$ with antiequivalent symmetric monoidal structures.
	
	The following definitions are taken from  Sections 12 and 13 of \cite{Po}. Let $W_1$, $W_2$ be vector spaces endowed with linear topologies.
	
	The topological vector space $W_1 \otimes^\ast W_2$ is the usual tensor product $W_1 \otimes W_2$ endowed with the linear topology for which a subspace $E\subset W_1 \otimes W_2$ is open if the following properties hold:
	\begin{itemize}
	    \item there exist open subspaces $P_1 \subset W_1$ and $P_2 \subset W_2$ such that $P_1 \otimes P_2 \subset E$;
	    \item for every $w_1 \in W_1$, there is an open subspace $Q_{w_1}\subset W_2$ such that 
	    $w_1 \otimes Q_{w_1} \subset E$;
	    \item for every $w_2 \in W_2$, there is an open subspace $Q_{w_2}\subset W_1$ such that 
	    $Q_{w_2}\otimes w_2 \subset E$.
	\end{itemize}

	The topological vector space $W_1 \otimes^! W_2$ is the usual tensor product $W_1 \otimes W_2$ endowed with the linear topology for which a subspace $E\subset W_1 \otimes W_2$ is open if there exist open subspaces $P_1 \subset W_1$ and $P_2 \subset W_2$ such that $P_1 \otimes W_2 + W_1 \otimes P_2 \subset E$.
	
	The completions of the vector spaces $W_1 \otimes^\ast W_2$ and $W_1 \otimes^! W_2$ are denoted respectively by $W_1 \widehat{\otimes}^\ast W_2$ and $W_1 \widehat{\otimes}^! W_2$.
	
	For the convenience of the reader, we recall here some facts about the operations $\widehat{\otimes}^\ast$ and $\widehat{\otimes}^!$.
	
	\begin{lem} \label{lem:t1}
	The following statements hold:
	\begin{enumerate}
	    \item \label{one}
	    For $W$ discrete and any $W'$, one has 
	    $W\widehat{\otimes}^\ast W' = W\otimes W'$ where $W\otimes W'$ is endowed with the inductive limit topology.
	    
	    \item \label{two} For $W$ discrete and $W' = \varprojlim W'_j$ linearly compact, one has 
	    $$
	    W \widehat{\otimes}^! W' = \varprojlim  \ W\otimes W'_j \ .
	    $$
	    
	    \item \label{three}
	    For $W = \varprojlim W_j$ and $W' = \varprojlim W'_j$ linearly compact, one has 
	    $$W\widehat{\otimes}^\ast W' = W\widehat{\otimes}^! W' = \varprojlim \ W_j\otimes W'_j \ . $$
	    
	    \item \label{four}
	    For any $W_1 , W_2, W_3$, one has canonical isomorphisms  
	    $$(W_1 \widehat{\otimes}^\ast W_2) \widehat{\otimes}^\ast W_3 \cong W_1 \widehat{\otimes}^\ast (W_2 \widehat{\otimes}^\ast W_3)$$ 
	    and 
	    $$(W_1 \widehat{\otimes}^! W_2) \widehat{\otimes}^! W_3 \cong W_1 \widehat{\otimes}^! (W_2 \widehat{\otimes}^! W_3) \ . $$

	\end{enumerate}
	\end{lem}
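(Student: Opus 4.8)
The plan is to argue directly from the definitions of the $\otimes^\ast$- and $\otimes^!$-topologies: for each statement I exhibit a cofinal family of open subspaces, compute the associated (discrete) quotients, and read off the completion as the corresponding projective or inductive limit.

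For (\ref{one}) and (\ref{two}) the key remark is that when $W$ is discrete the subspace $\{0\}$ is open in $W$. In the definition of $\otimes^\ast$ this permits the choices $P_1 = \{0\}$ and $Q_{w_2} = \{0\}$, so the first and third conditions become vacuous; hence $E \subset W \otimes W'$ is $\otimes^\ast$-open if and only if for every $w \in W$ there is an open $Q_w \subset W'$ with $w \otimes Q_w \subset E$. Writing $W = \varinjlim_i W_i$ with the $W_i$ finite-dimensional and intersecting the finitely many $Q_w$ attached to a basis of each $W_i$, one checks that this is precisely the inductive limit topology on $W \otimes W' = \varinjlim_i (W_i \otimes W')$; since each $W_i \otimes W'$ is a finite power of $W'$, the resulting strict inductive limit is already complete (all spaces here being complete), so it coincides with $W \widehat{\otimes}^\ast W'$. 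For (\ref{two}), the choice $P_1 = \{0\}$ in the definition of $\otimes^!$ shows that $E$ is $\otimes^!$-open if and only if $W \otimes P_2 \subset E$ for some open $P_2 \subset W'$; letting $P_2$ run through the defining filtration $W'^{(j)} := \ker(W' \to W'_j)$ yields a cofinal family with quotients $(W \otimes W')/(W \otimes W'^{(j)}) \cong W \otimes W'_j$, and therefore $W \widehat{\otimes}^! W' = \varprojlim_j W \otimes W'_j$.

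The heart of the lemma is (\ref{three}): the two topologies coincide when both factors are linearly compact. One inclusion is automatic, since a $\otimes^!$-open subspace $E \supset P_1 \otimes W' + W \otimes P_2$ is $\otimes^\ast$-open (take $Q_{w_1} = P_2$ and $Q_{w_2} = P_1$). For the converse, take a $\otimes^\ast$-open $E$ together with open $P_1 \subset W$, $P_2 \subset W'$ satisfying $P_1 \otimes P_2 \subset E$. Linear compactness forces $W/P_1$ to be finite-dimensional; choosing representatives $w_1, \dots, w_n$ of a basis and passing to the open subspace $P_2' := P_2 \cap \bigcap_k Q_{w_k}$, one has $w_k \otimes P_2' \subset w_k \otimes Q_{w_k} \subset E$ and $P_1 \otimes P_2' \subset P_1 \otimes P_2 \subset E$, so every $w \in W$ gives $w \otimes P_2' \subset E$, i.e.\ $W \otimes P_2' \subset E$. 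Symmetrically $P_1' \otimes W' \subset E$ for some open $P_1'$, whence $P_1' \otimes W' + W \otimes P_2' \subset E$ and $E$ is $\otimes^!$-open. With the topologies identified, the computation of (\ref{two}) carried out symmetrically — with cofinal open subspaces $W^{(j)} \otimes W' + W \otimes W'^{(j)}$ and quotients $W_j \otimes W'_j$ — gives $W \widehat{\otimes}^\ast W' = W \widehat{\otimes}^! W' = \varprojlim_j W_j \otimes W'_j$.

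For the associativity isomorphisms (\ref{four}), both triple products are completions of the same algebraic tensor product $W_1 \otimes W_2 \otimes W_3$, so it is enough to compare the topologies the two bracketings induce before completion. The $\otimes^!$ case is transparent: unwinding the definition, a subspace is open for either bracketing exactly when it contains $P_1 \otimes W_2 \otimes W_3 + W_1 \otimes P_2 \otimes W_3 + W_1 \otimes W_2 \otimes P_3$ for some open $P_i \subset W_i$, a condition independent of the bracketing. I expect the $\otimes^\ast$ case to be the main obstacle: a double application of the asymmetric defining conditions must be shown to characterize, for both bracketings, the same symmetric system of ``partial'' openness conditions on $E \subset W_1 \otimes W_2 \otimes W_3$ (one condition for each single factor and each pair of factors). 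This is a bookkeeping verification rather than a conceptual difficulty; alternatively, the whole of (\ref{four}) may be quoted directly from Positselski \cite{Po}.
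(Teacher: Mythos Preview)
Your arguments for (1)--(3) are correct and considerably more detailed than the paper's own proof, which simply cites Positselski \cite{Po}: statements (1) and (3) are referred to \cite[Examples~13.1(1),(2)]{Po}, statement (2) to \cite[Remark~12.1]{Po}, and statement (4) to \cite[Proposition~13.4]{Po}. So you have taken a genuinely different, more self-contained route, unpacking the definitions rather than invoking the reference. What this buys you is independence from \cite{Po} for the first three statements; what the paper's approach buys is brevity and the reassurance that the general theory has been worked out elsewhere.

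Two small remarks. In (1), your claim that the strict inductive limit $\varinjlim_i(W_i\otimes W')$ is already complete relies on $W'$ being complete; the lemma as stated says ``any $W'$'', but in the paper's applications $W'$ is always complete (discrete, linearly compact, or an ind/pro version thereof), so this is harmless in context. In (4), you correctly identify that the $\widehat{\otimes}^!$ associativity unwinds to the symmetric three-term condition, and you honestly flag the $\widehat{\otimes}^\ast$ case as the remaining bookkeeping, offering to cite \cite{Po} instead. Since the paper itself proves (4) by citation alone, your partial direct argument plus the same citation is at least as complete as what the paper does; there is no gap relative to the paper's standard.
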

	\begin{proof}
	    Statements (1) and (3) follow from \cite[Examples~13.1 (1), (2)]{Po}. Statement (2) is observed in \cite[Remark 12.1]{Po} and
	    statement (4) is proved in \cite[Proposition 13.4]{Po}.
	\end{proof}
	
	The above facts lead to the following
	
	\begin{lem}\label{lem:t2}
	One has canonical isomorphisms:
	
	\begin{enumerate}
	    \item[(a)]\label{aa}
	    $\mathbf{V}^{p,q} = V^{\widehat{\otimes}^\ast p}\ \widehat{\otimes}^\ast \  \mathbf{V}^{\widehat{\otimes}^\ast q}$,
	    \item[(b)] \label{bb}
	    $\widehat{\mathbf{V}}^{p,q} = V^{\widehat{\otimes}^! p}\ \widehat{\otimes}^! \  \mathbf{V}^{\widehat{\otimes}^! q}$,
	    \item[(c)] \label{cc}
	    $V\widehat{\otimes}^! \mathbf{V} \cong \mathfrak{gl}(V)$.
	\end{enumerate}
	\end{lem}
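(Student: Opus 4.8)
The plan is to establish (a) and (b) by evaluating each side through the explicit computational rules collected in Lemma~\ref{lem:t1}, and then to obtain (c) as the case $p=q=1$ of (b) combined with the classical identification of $V\otimes\mathbf{V}$ with the adjoint representation.

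For (a), I would invoke associativity (Lemma~\ref{lem:t1}(\ref{four})) to treat the two iterated powers independently. As $V$ is discrete, iterating Lemma~\ref{lem:t1}(\ref{one}) gives $V^{\widehat{\otimes}^\ast p}=V^{\otimes p}$ with the discrete topology. As $\mathbf{V}=\varprojlim_j V_j^\ast$ is linearly compact, iterating Lemma~\ref{lem:t1}(\ref{three}) gives $\mathbf{V}^{\widehat{\otimes}^\ast q}=\varprojlim_j(V_j^{\otimes q})^\ast=(V^{\otimes q})^\ast$; here the one non-formal ingredient is the finite-dimensional duality $V_j^\ast\otimes V_j^\ast\cong(V_j\otimes V_j)^\ast$, pushed through the limit using that the diagonal is cofinal. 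Finally $V^{\otimes p}$ is discrete while $(V^{\otimes q})^\ast$ is linearly compact, so Lemma~\ref{lem:t1}(\ref{one}) produces $V^{\otimes p}\otimes(V^{\otimes q})^\ast$ with the inductive limit topology, which is precisely the ind-linearly compact space $\mathbf{V}^{p,q}$.

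For (b), the linearly compact factor is computed exactly as before, since by Lemma~\ref{lem:t1}(\ref{three}) the operations $\widehat{\otimes}^!$ and $\widehat{\otimes}^\ast$ coincide on linearly compact spaces; thus $\mathbf{V}^{\widehat{\otimes}^! q}=(V^{\otimes q})^\ast$. For the discrete factor I would read off directly from the definition of $\otimes^!$ that the $\otimes^!$-product of two discrete spaces is again discrete (taking $P_1=P_2=0$ makes every subspace open), hence complete, so $V^{\widehat{\otimes}^! p}=V^{\otimes p}$. Then $V^{\otimes p}$ is discrete and $(V^{\otimes q})^\ast=\varprojlim_j(V_j^{\otimes q})^\ast$ is linearly compact, so Lemma~\ref{lem:t1}(\ref{two}) yields $V^{\otimes p}\widehat{\otimes}^!(V^{\otimes q})^\ast=\varprojlim_j\big(V^{\otimes p}\otimes(V_j^{\otimes q})^\ast\big)$, which is exactly the pro-discrete presentation of $\widehat{\mathbf{V}}^{p,q}$ recorded earlier (up to transposing the two tensor factors).

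Finally, for (c) I would specialise (b) to $p=q=1$, giving $V\widehat{\otimes}^!\mathbf{V}=\widehat{\mathbf{V}}^{1,1}=\varprojlim_j\Hom_\CC(V_j,V)$. Since an endomorphism of $V=\varinjlim_j V_j$ is the same datum as a compatible family of maps $V_j\to V$, this projective limit is canonically $\Hom_\CC(V,V)=\mathfrak{gl}(V)$. It remains to check that the usual bijection $V\otimes\mathbf{V}\to\mathfrak{gl}(V)$, $v\otimes\varphi\mapsto(w\mapsto\varphi(w)v)$, intertwines the tensor-product action with the adjoint action; this is a short commutator computation showing $[X,T_{v,\varphi}]=T_{Xv,\varphi}+T_{v,X\cdot\varphi}$, where $T_{v,\varphi}(w)=\varphi(w)v$ and $X\cdot\varphi=-\varphi\circ X$. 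Being a continuous equivariant bijection between pro-discrete spaces, it is automatically a topological isomorphism by the open mapping theorem Lemma~\ref{lem:iso-ind}(b). I expect the main obstacle to be essentially bookkeeping: making sure that the projective and inductive limits produced by Lemma~\ref{lem:t1} really carry the same topologies as the spaces $\mathbf{V}^{p,q}$ and $\widehat{\mathbf{V}}^{p,q}$. The equivariance verification in (c) is the only genuinely representation-theoretic step, and the matching of topologies is otherwise free once a continuous bijection has been exhibited, thanks to Lemma~\ref{lem:iso-ind}.
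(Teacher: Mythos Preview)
Your proposal is correct and follows essentially the same route as the paper, reducing everything to the computational rules of Lemma~\ref{lem:t1}. The only differences are cosmetic: the paper proves (c) directly from Lemma~\ref{lem:t1}(\ref{two}) rather than as a specialisation of (b), and it omits your equivariance check and appeal to Lemma~\ref{lem:iso-ind}, since the projective-limit identification $\varprojlim_j\Hom_\CC(V_j,V)=\mathrm{End}\,V$ already carries the correct topology by construction.
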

	\begin{proof}
	    The space $V = \bigcup_i V_i$ is discrete and the space $\mathbf{V} = \varprojlim (V_i)^\ast$ is linearly compact, hence 
	    $V^{\otimes p} = V^{\widehat{\otimes}^\ast p}$ by Lemma~\ref{lem:t1}(1), and 
	    $(V^{\otimes q})^\ast = \varprojlim (V_i^{\otimes q})^\ast = \mathbf{V}^{\widehat{\otimes}^! q}$ by Lemma~\ref{lem:t1}(3).
	    
	    Furthermore, by statement (2) of Lemma~\ref{lem:t1}, one has 
	    $$
	    V \widehat{\otimes}^! \mathbf{V} = \varprojlim \  V \otimes V_i^\ast = \varprojlim \ \mathrm{Hom}(V_i , V) = \mathrm{End}V = \mathfrak{gl}(V) \ .
	    $$
	\end{proof}
	
	We can now strengthen Theorem~\ref{thm:main} as follows.
	\begin{prop} \label{prop:t}
	    The following statements hold:
	    \begin{enumerate}
	        \item[(i)] The category $\mathbf{T}_{\mathfrak{gl}(V)}$ is symmetric monoidal when endowed with the tensor product $\widehat{\otimes}^\ast$.
	        
	        \item[(ii)] The category $\widehat{\mathbf{T}}_{\mathfrak{gl}(V)}$ is symmetric monoidal when endowed with the tensor product $\widehat{\otimes}^!$.
	        
	        \item[(iii)] The functor $(\phantom{a})^{wt} : \mathbf{T}_{\mathfrak{gl}(V)} \rightarrow \mathbb{T}_{\mathfrak{gl}(\infty)}$ establishes an equivalence of abelian monoidal categories.
	        
	        \item[(iv)] The functor $(\phantom{a})^\ast : \mathbf{T}_{\mathfrak{gl}(V)} \rightarrow \widehat{\mathbf{T}}_{\mathfrak{gl}(V)}$ establishes an antiequivalence of abelian monoidal categories.
	    \end{enumerate}
	\end{prop}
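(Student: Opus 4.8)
The plan is to transport the known symmetric monoidal structure of $\mathbb{T}_{\mathfrak{gl}(\infty)}$ along the abelian equivalence $(\phantom{a})^{wt}$ of Theorem~\ref{thm:main}(\ref{tre}), and to identify the transported tensor product with $\widehat{\otimes}^\ast$. First I would pin down the operations on the generating objects: combining Lemma~\ref{lem:t2}(a) with the associativity of Lemma~\ref{lem:t1}(\ref{four}) and the symmetry of $\widehat{\otimes}^\ast$ (induced by the flip of factors, legitimate because Beilinson's topology is symmetric in its two arguments), I obtain a canonical isomorphism
$$\mathbf{V}^{p,q}\,\widehat{\otimes}^\ast\,\mathbf{V}^{p',q'}\;\cong\;\mathbf{V}^{p+p',\,q+q'},$$
with unit object $\mathbf{V}^{0,0}=\CC$; dually, from Lemma~\ref{lem:t2}(b), $\widehat{\mathbf{V}}^{p,q}\,\widehat{\otimes}^!\,\widehat{\mathbf{V}}^{p',q'}\cong\widehat{\mathbf{V}}^{p+p',\,q+q'}$. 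Taking weight parts and using $(\mathbf{V}^{p,q})^{wt}=V^{p,q}$ from Lemma~\ref{lem: weightpart}(a), together with $V^{p+p',q+q'}=V^{p,q}\otimes V^{p',q'}$ in $\mathbb{T}_{\mathfrak{gl}(\infty)}$, this yields on generators a natural isomorphism $(\mathbf{V}^{p,q}\,\widehat{\otimes}^\ast\,\mathbf{V}^{p',q'})^{wt}\cong(\mathbf{V}^{p,q})^{wt}\otimes(\mathbf{V}^{p',q'})^{wt}$ of $\mathfrak{gl}(\infty)$-modules.

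Next I would promote these statements to arbitrary objects. By Proposition~\ref{prop:sub} every $W,W'$ embed as closed $\mathfrak{gl}(V)$-submodules of finite direct sums $M,M'$ of generators. The crucial point is that $\widehat{\otimes}^\ast$ carries a closed embedding to a closed embedding, so that $W\,\widehat{\otimes}^\ast\,W'$ is a closed submodule of the generator sum $M\,\widehat{\otimes}^\ast\,M'$ and hence, again by Proposition~\ref{prop:sub}, an object of $\mathbf{T}_{\mathfrak{gl}(V)}$; this gives closure under the operation. To compute the weight part of $W\,\widehat{\otimes}^\ast\,W'$ I would observe that its weight vectors are products of weight vectors of the factors, whence $(W\,\widehat{\otimes}^\ast\,W')^{wt}=W^{wt}\otimes(W')^{wt}$, and that this identification is compatible with the subquotient presentations via the closure/weight-part dictionary of Lemma~\ref{lem: weightpart}(b) and the splitting Lemma~\ref{T-split}. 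Thus $(\phantom{a})^{wt}$ intertwines $\widehat{\otimes}^\ast$ with the tensor product of $\mathbb{T}_{\mathfrak{gl}(\infty)}$, naturally in both variables.

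With this monoidal comparison in hand, parts (i) and (iii) follow: the associativity, unit and commutativity constraints on $\mathbb{T}_{\mathfrak{gl}(\infty)}$ transport through the equivalence $(\phantom{a})^{wt}$ to constraints on $(\mathbf{T}_{\mathfrak{gl}(V)},\widehat{\otimes}^\ast)$, and these coincide with the constraints supplied by Lemma~\ref{lem:t1}(\ref{four}) and the flip, so $(\phantom{a})^{wt}$ is an equivalence of abelian symmetric monoidal categories. For (ii) and (iv) I would invoke the abelian antiequivalence $(\phantom{a})^\ast$ of Theorem~\ref{thm:main}(\ref{due}) together with the fact that continuous duality interchanges the two Beilinson products, i.e.\ there is a natural isomorphism $(W\,\widehat{\otimes}^\ast\,W')^\ast\cong W^\ast\,\widehat{\otimes}^!\,(W')^\ast$. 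On generators both sides equal $\widehat{\mathbf{V}}^{q+q',\,p+p'}$, using the definition $\widehat{\mathbf{V}}^{p,q}=(\mathbf{V}^{q,p})^\ast$ and the isomorphisms above; the general case follows by the same reduction to submodules. This simultaneously endows $\widehat{\mathbf{T}}_{\mathfrak{gl}(V)}$ with the symmetric monoidal structure $\widehat{\otimes}^!$, proving (ii), and makes $(\phantom{a})^\ast$ an antiequivalence of abelian symmetric monoidal categories, proving (iv).

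The main obstacle I anticipate is the topological step in the second paragraph: controlling Beilinson's completed tensor products on \emph{closed subquotients}, specifically that $\widehat{\otimes}^\ast$ preserves closed embeddings of objects of $\mathbf{T}_{\mathfrak{gl}(V)}$ and that the weight part of a completed tensor product is exactly the algebraic tensor product of the weight parts. The explicit inverse-limit descriptions of the generators in Lemma~\ref{lem:t1}(\ref{two}) and (\ref{three}), together with the density statement of Lemma~\ref{lem: weightpart}(b), should make this tractable, but it is precisely here that care with the interplay of the inductive- and projective-limit topologies is required.
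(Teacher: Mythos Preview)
Your outline is essentially the paper's: compute on generators via Lemma~\ref{lem:t2}, reduce to subobjects via Proposition~\ref{prop:sub}, and transport constraints through $(\phantom{a})^{wt}$ and $(\phantom{a})^\ast$. The one substantive difference is how you handle the step you yourself flag as the main obstacle.

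You propose to show directly that $\widehat{\otimes}^\ast$ preserves closed embeddings and that weight vectors of a completed tensor product are tensors of weight vectors. The paper sidesteps both issues entirely by exploiting the \emph{semisimplicity} of $\mathcal{I}$ (Lemma~\ref{lem:ind-linsubquot}): if $W\hookrightarrow M$ is a closed embedding into a finite sum of generators, then there is a (not necessarily $\mathfrak{gl}(V)$-stable) topological complement $C$ with $M\cong W\oplus C$ in $\mathcal{I}$. Since $\widehat{\otimes}^\ast$ is biadditive, $W\,\widehat{\otimes}^\ast\,W'$ is then a \emph{direct summand} of $M\,\widehat{\otimes}^\ast\,M'$, hence automatically closed and an object of $\mathbf{T}_{\mathfrak{gl}(V)}$. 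The same trick, upgraded to $\mathfrak{h}$-equivariant splittings via Lemma~\ref{T-split}, handles the weight-part computation: writing $W\oplus U=\bigoplus_i\mathbf{V}^{p_i,q_i}$ as $\mathfrak{h}$-modules and using biadditivity on both sides reduces $(W\,\widehat{\otimes}^\ast\,W')^{wt}=W^{wt}\otimes(W')^{wt}$ to the generator case you already did. So your anticipated obstacle dissolves once you invoke splittings rather than trying to control the topology of the completion head-on; the rest of your argument then goes through exactly as in the paper.
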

	
	\begin{proof}
	Let us first show that if $W$ and $W'$ are two objects of $\mathbf{T}_{\mathfrak{gl}(V)}$, then $W\widehat{\otimes}^\ast W'$ is an object of $\mathbf{T}_{\mathfrak{gl}(V)}$. Indeed, by Proposition~\ref{prop:sub} and by Lemma~\ref{lem:ind-linsubquot}, we may assume that there are  (in general not $\mathfrak{gl}(V)$-stable) objects $C$ and $C'$ of $\mathcal{I}$ such that $W\oplus C$ and $W' \oplus C'$ are isomorphic to finite direct sums of $\mathbf{V}^{p,q}$-s. Since the tensor product $\widehat{\otimes}^\ast$ is a biadditive functor, Lemma~\ref{lem:t2}(a) implies that $W\widehat{\otimes}^\ast W'$ is isomorphic to a closed $\mathfrak{gl}(V)$-stable subspace of a finite direct sum of $\mathbf{V}^{p,q}$-s; hence it is an object of $\mathbf{T}_{\mathfrak{gl}(V)}$. 
	
	The fact that $(\mathbf{T}_{\mathfrak{gl}(V)}, \widehat{\otimes}^\ast)$ is a symmetric monoidal category
	follows from the associativity in Lemma~\ref{lem:t1}(\ref{four}) and the commutativity of $\widehat{\otimes}^\ast$. This proves (i).
	Statement (ii) is proved analogously by using Lemma~\ref{lem:t2}(b), Corollary~\ref{quot} and the associativity and commutativity of $\widehat{\otimes}^!$.
	
	To prove (iii), observe that Lemma~\ref{lem:t2}(a) implies
	$$(\mathbf{V}^{p,q} \widehat{\otimes}^\ast \mathbf{V}^{p',q'})^{wt} = (\mathbf{V}^{p,q})^{wt }\otimes (\mathbf{V}^{p',q'})^{wt} \ . $$
	Using Lemma~\ref{T-split} and Proposition~\ref{prop:sub}, one gets  decompositions of $\mathfrak{h}$-modules $W\oplus U = \bigoplus_i \mathbf{V}^{p_i,q_i}$ and $W'\oplus U' = \bigoplus_j \mathbf{V}^{p'_j,q'_j}$ for any two objects $W$ and $W'$  of of $\mathbf{T}_{\mathfrak{gl}(V)}$. 
	This shows that $(W\widehat{\otimes}^\ast W')^{wt} = W^{wt}\otimes (W')^{wt}$, and (iii) is proved.
	
	Let us now establish (iv). Indeed, by Lemma~\ref{lem:t2}(a), (b) we have
	$$(\mathbf{V}^{p,q} \ \widehat{\otimes}^\ast \ \mathbf{V}^{p',q'})^\ast = (\mathbf{V}^{p+p',q+q'})^\ast
	= \widehat{\mathbf{V}}^{q+q', p+p'} = \widehat{\mathbf{V}}^{q, p} \ \widehat{\otimes}^! \  \widehat{\mathbf{V}}^{q', p'}
	= (\mathbf{V}^{p,q})^\ast \ \widehat{\otimes}^! \ (\mathbf{V}^{p',q'})^\ast \ . $$
	Therefore, using the semisimplicity of $\mathcal{I}$ and Proposition~\ref{prop:sub}, we conclude that 
	$$(W\ \widehat{\otimes}^\ast \ W')^\ast = W^\ast \ \widehat{\otimes}^! \  (W')^\ast$$ 
	for any two objects $W, W'$ of $\mathbf{T}_{\mathfrak{gl}(V)}$. This proves (iv).
	\end{proof}

	To state some corollaries of Proposition~\ref{prop:t}, we need to fix some notation. Recall that the \textit{radical} $ \mathrm{rad}M $ of an object $ M $ of an abelian category is the intersection of all kernels of all homomorphisms of $ M $ into semisimple objects. Then the descending \textit{radical filtration} of $ M $ is
	$$
	\mathrm{rad}^{i}M = \mathrm{rad}(\mathrm{rad}^{i-1}M)
	$$
	for $ i\geqslant 2$, and the \textit{layers} $ \mathrm{rad}^{i}M/\mathrm{rad}^{i+1}M$ of this filtration are semisimple objects. Moreover, if $ M $ has finite length (i.e., admits a Jordan-H\"older series), then the radical filtration is finite and separating.
	
	Given a Young diagram $ \lambda $, by $ |\lambda|  $ we denote its degree and $ \lambda^{\perp} $ stands for the conjugate diagram. Next, by $ N^{\lambda}_{\mu, \nu} $ we denote the Littlewood-Richardson coefficient associated to the Young diagrams $ \lambda, \mu, \nu $.
	
	Finally, we note that the definition of Schur functors $ \mathbb{S}_{\lambda} $ for Young diagrams $ \lambda $ makes sense for pure tensors also in the category $ \widehat{\mathbf{T}}_{\mathfrak{gl}(V)} $. Indeed, one can apply the usual symmetrization procedure associated with a Young tableau of shape $ \lambda $ to tensor products of the form $ V^{\otimes p}$ and obtain $ \mathbb{S}_{\lambda}(V) $. Then one sets $ \widehat{\mathbb{S}}_{\lambda}(\mathbf{V}):= \mathbb{S}_{\lambda}(V)^{*} $. Moreover, $ \mathbb{S}_{\lambda}(V) $ and $ \widehat{\mathbb{S}}_{\lambda}(\mathbf{V}) $ are irreducible objects of $ \widehat{\mathbf{T}}_{\mathfrak{gl}(V)} $ for any $ \lambda $.
	
	Theorem~\ref{thm:main} has now the following 
	\begin{cor}\phantom{text}\\
		\begin{itemize}
			\item[a)] The simple objects of $ \widehat{\mathbf{T}}_{\mathfrak{gl}(V)} $ are parametrized by pairs of Young diagrams $ (\lambda, \mu)\colon$ the simple object $\widehat{\mathbf{V}}_{\lambda, \mu} $ is the quotient of the object $ \mathbb{S}_{\lambda}(V)\ \widehat{\otimes}^! \ \widehat{\mathbb{S}}_{\mu}(\mathbf{V})\in \widehat{\mathbf{T}}_{\mathfrak{gl}(V)} $ by its radical. Moreover, the object $ \mathbb{S}_{\lambda}(V)\ \widehat{\otimes}^! \ \widehat{\mathbb{S}}_{\mu}(\mathbf{V})$ has finite length.
			\item[b)] The objects $ \mathbb{S}_{\lambda}(V)\ \widehat{\otimes}^! \ \widehat{\mathbb{S}}_{\mu}(\mathbf{V})\in \widehat{\mathbf{T}}_{\mathfrak{gl}(V)} $, for pairs of Young diagrams $ (\lambda, \mu) $, are projective and indecomposable, and any indecomposable projective object of the category $ \widehat{\mathbf{T}}_{\mathfrak{gl}(V)} $ is isomorphic to $ \mathbb{S}_{\lambda}(V)\ \widehat{\otimes}^! \ \widehat{\mathbb{S}}_{\mu}(\mathbf{V})$ for some $ (\lambda, \mu) $.
			\item[c)] The category $ \widehat{\mathbf{T}}_{\mathfrak{gl}(V)} $ is equivalent to the category of locally unitary finite-dimensional modules over the infinite-dimensional Koszul algebra $ \mathcal{A}_{sl(\infty)} $ studied in \cite[Sect. 4]{DPS}.
			\item[d)] We have 
			$$ \mathrm{rad}^{i}\left(\mathbb{S}_{\lambda}(V)\ \widehat{\otimes}^! \ \widehat{\mathbb{S}}_{\mu}(\mathbf{V})\right)/\mathrm{rad}^{i+1}\left(\mathbb{S}_{\lambda}(V)\ \widehat{\otimes}^! \ \widehat{\mathbb{S}}_{\mu}(\mathbf{V})\right) \simeq \bigoplus_{\nu, \varkappa}\left(\sum_{|\gamma| = i}N^{\lambda}_{\nu, \gamma}N^{\mu}_{\varkappa, \gamma}\right)\widehat{\mathbf{V}}_{\nu, \varkappa} \ , 
			$$
			where $\nu$, $\varkappa$ and $\gamma$ are Young diagrams.
			In particular, if $ \widehat{\mathbf{V}}_{\nu, \varkappa} $ is a simple constituent of $ \mathrm{rad}^{i}\left(\mathbb{S}_{\lambda}(V)\ \widehat{\otimes}^! \ \widehat{\mathbb{S}}_{\mu}(\mathbf{V})\right)/\mathrm{rad}^{i+1}\left(\mathbb{S}_{\lambda}(V)\ \widehat{\otimes}^! \ \widehat{\mathbb{S}}_{\mu}(\mathbf{V})\right)  $ then the Young diagrams $ \nu $ and $ \varkappa $ are obtained respectively from $ \lambda $ and $ \mu $ by removing exactly $ i $ boxes.
			\item[e)] For any $ (\lambda, \mu) $, $ (\nu, \varkappa) $, and $ i\in\mathbb{Z}_{\geqslant 0} $, the vector space $ \mathrm{Ext}^{i}_{{\widehat{\mathbf{T}}}_{\mathfrak{gl}(V)}}\left(\widehat{\mathbf{V}}_{\lambda, \mu}, \widehat{\mathbf{V}}_{\nu, \varkappa}\right) $ is finite dimensional and its dimension equals the multiplicity of $ \widehat{\mathbf{V}}_{\nu, \varkappa^{\perp}} $ in 
			$$ \mathrm{rad}^{i}\left(\mathbb{S}_{\lambda}(V)\ \widehat{\otimes}^! \ \widehat{\mathbb{S}}_{\mu^{\perp}}(\mathbf{V})\right)/\mathrm{rad}^{i+1}\left(\mathbb{S}_{\lambda}(V)\ \widehat{\otimes}^! \ \widehat{\mathbb{S}}_{\mu^{\perp}}(\mathbf{V})\right),  
			$$ 
			i.e., 
			$$ 
			\mathrm{dim}\mathrm{Ext}^{i}_{{\widehat{\mathbf{T}}}_{\mathfrak{gl}(V)}}\left(\widehat{\mathbf{V}}_{\lambda, \mu}, \widehat{\mathbf{V}}_{\nu, \varkappa}\right) = \sum_{|\gamma| = i}N^{\lambda}_{\nu, \gamma}N^{\mu^\perp}_{\varkappa^\perp, \gamma} \ . 
			$$
		\end{itemize}
	\end{cor}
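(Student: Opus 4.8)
The plan is to transport the entire structure theory of $\mathbb{T}_{\mathfrak{gl}(\infty)}$ established in \cite{DPS} through the chain of functors supplied by Proposition~\ref{prop:t}. Write $F := (\phantom{a})^{wt}\colon \mathbf{T}_{\mathfrak{gl}(V)}\xrightarrow{\sim}\mathbb{T}_{\mathfrak{gl}(\infty)}$ for the equivalence of abelian monoidal categories and $D := (\phantom{a})^\ast\colon\mathbf{T}_{\mathfrak{gl}(V)}\to\widehat{\mathbf{T}}_{\mathfrak{gl}(V)}$ for the monoidal antiequivalence. Their composite $G := D\circ F^{-1}$ is then a monoidal antiequivalence $\mathbb{T}_{\mathfrak{gl}(\infty)}\to\widehat{\mathbf{T}}_{\mathfrak{gl}(V)}$, and the whole corollary amounts to reading off, under $G$, the known facts about $\mathbb{T}_{\mathfrak{gl}(\infty)}$: the functor $G$ carries simple objects to simple objects, indecomposable injectives to indecomposable projectives, the socle filtration of an object to the radical filtration of its image, and reverses the arguments of $\mathrm{Ext}$. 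The one computation underlying everything is the identification, via Lemma~\ref{lem:t2} and Proposition~\ref{prop:t}(iv),
$$
D\big(\mathbb{S}_\mu(V)\,\widehat{\otimes}^\ast\,\mathbb{S}_\lambda(\mathbf{V})\big)
= \widehat{\mathbb{S}}_\mu(\mathbf{V})\,\widehat{\otimes}^!\,\mathbb{S}_\lambda(V)
= \mathbb{S}_\lambda(V)\,\widehat{\otimes}^!\,\widehat{\mathbb{S}}_\mu(\mathbf{V}),
$$
together with $F\big(\mathbb{S}_\mu(V)\,\widehat{\otimes}^\ast\,\mathbb{S}_\lambda(\mathbf{V})\big)=\mathbb{S}_\mu(V)\otimes\mathbb{S}_\lambda(V_\ast)$. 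Thus $\mathbb{S}_\lambda(V)\,\widehat{\otimes}^!\,\widehat{\mathbb{S}}_\mu(\mathbf{V})$ is exactly $G$ applied to the indecomposable injective $\mathbb{S}_\mu(V)\otimes\mathbb{S}_\lambda(V_\ast)$ of $\mathbb{T}_{\mathfrak{gl}(\infty)}$, and consequently $G$ sends the simple socle $V_{\alpha,\beta}$ of $\mathbb{S}_\alpha(V)\otimes\mathbb{S}_\beta(V_\ast)$ to the simple head $\widehat{\mathbf{V}}_{\beta,\alpha}$ of $\mathbb{S}_\beta(V)\,\widehat{\otimes}^!\,\widehat{\mathbb{S}}_\alpha(\mathbf{V})$.

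For (a) and (b) I would argue as follows. Since $G$ is an antiequivalence it induces a bijection on isomorphism classes of simple objects, and the simples of $\mathbb{T}_{\mathfrak{gl}(\infty)}$ are parametrized by pairs of Young diagrams by \cite{DPS}; the displayed identification shows that $\widehat{\mathbf{V}}_{\lambda,\mu}$ is the head (quotient by the radical) of $\mathbb{S}_\lambda(V)\,\widehat{\otimes}^!\,\widehat{\mathbb{S}}_\mu(\mathbf{V})$, and finite length of the latter follows from finite length of the corresponding injective in $\mathbb{T}_{\mathfrak{gl}(\infty)}$ (\cite[Proposition~4.5]{DPS}), as length is preserved by $G$. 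Likewise $G$ interchanges indecomposable injectives and indecomposable projectives; since the indecomposable injectives of $\mathbb{T}_{\mathfrak{gl}(\infty)}$ are exactly the $\mathbb{S}_\lambda(V)\otimes\mathbb{S}_\mu(V_\ast)$ (\cite[Proposition~4.5]{DPS}), their images $\mathbb{S}_\lambda(V)\,\widehat{\otimes}^!\,\widehat{\mathbb{S}}_\mu(\mathbf{V})$ are precisely the indecomposable projectives of $\widehat{\mathbf{T}}_{\mathfrak{gl}(V)}$, which is (b). Statement (c) I would obtain by composing the antiequivalence $\widehat{\mathbf{T}}_{\mathfrak{gl}(V)}\simeq\mathbb{T}_{\mathfrak{gl}(\infty)}^{\,\mathrm{op}}$ with the equivalence $\mathbb{T}_{\mathfrak{gl}(\infty)}\simeq\mathcal{A}_{sl(\infty)}\text{-mod}$ of \cite[Sect.~4]{DPS} and using that $\mathcal{A}_{sl(\infty)}$ is isomorphic to its opposite algebra, so that $(\mathcal{A}_{sl(\infty)}\text{-mod})^{\mathrm{op}}\simeq\mathcal{A}_{sl(\infty)}\text{-mod}$ on locally unitary finite-dimensional modules.

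For (d), by the identification above one has $\mathbb{S}_\lambda(V)\,\widehat{\otimes}^!\,\widehat{\mathbb{S}}_\mu(\mathbf{V})=G\big(\mathbb{S}_\mu(V)\otimes\mathbb{S}_\lambda(V_\ast)\big)$, and since $G$ turns the socle filtration into the radical filtration layer by layer, the $i$-th radical layer of the left-hand side is $G$ applied to the $i$-th socle layer of $\mathbb{S}_\mu(V)\otimes\mathbb{S}_\lambda(V_\ast)$. The socle filtration of the latter is computed in \cite{DPS}, its $i$-th layer being $\bigoplus_{\alpha,\beta}\big(\sum_{|\gamma|=i}N^{\mu}_{\alpha,\gamma}N^{\lambda}_{\beta,\gamma}\big)V_{\alpha,\beta}$. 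Applying $G$, which sends $V_{\alpha,\beta}$ to $\widehat{\mathbf{V}}_{\beta,\alpha}$, and relabelling $\nu=\beta$, $\varkappa=\alpha$ yields exactly the asserted formula, the symmetry $N^{\mu}_{\varkappa,\gamma}N^{\lambda}_{\nu,\gamma}=N^{\lambda}_{\nu,\gamma}N^{\mu}_{\varkappa,\gamma}$ absorbing the index swap; the ``removing $i$ boxes'' remark is immediate since $N^{\lambda}_{\nu,\gamma}\neq 0$ forces $\nu\subseteq\lambda$ with $|\lambda|-|\nu|=i$.

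Part (e) is where the main obstacle lies, since it requires passing through Koszul duality. First, contravariance of $G$ together with $\widehat{\mathbf{V}}_{\lambda,\mu}=G(V_{\mu,\lambda})$ gives $\dim\mathrm{Ext}^i_{\widehat{\mathbf{T}}_{\mathfrak{gl}(V)}}(\widehat{\mathbf{V}}_{\lambda,\mu},\widehat{\mathbf{V}}_{\nu,\varkappa})=\dim\mathrm{Ext}^i_{\mathbb{T}_{\mathfrak{gl}(\infty)}}(V_{\varkappa,\nu},V_{\mu,\lambda})$, with arguments reversed. Next I would invoke the Koszulity of $\mathbb{T}_{\mathfrak{gl}(\infty)}$ and the explicit description of its Koszul dual from \cite{DPS}: the Ext algebra between simples is governed by the Koszul-dual category, in which the parametrizing Young diagrams get conjugated, and this is the precise origin of the diagrams $\mu^\perp$ and $\varkappa^\perp$ in the statement. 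Concretely, the Koszul-duality identity expresses $\dim\mathrm{Ext}^i$ as the multiplicity of $\widehat{\mathbf{V}}_{\nu,\varkappa^\perp}$ in the $i$-th radical layer of the projective cover $\mathbb{S}_\lambda(V)\,\widehat{\otimes}^!\,\widehat{\mathbb{S}}_{\mu^\perp}(\mathbf{V})$ of $\widehat{\mathbf{V}}_{\lambda,\mu^\perp}$; the closed formula $\sum_{|\gamma|=i}N^{\lambda}_{\nu,\gamma}N^{\mu^\perp}_{\varkappa^\perp,\gamma}$ then follows by applying part (d) to this projective, and finite-dimensionality is clear since each radical layer is a finite direct sum of simples. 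The delicate point will be to match \cite{DPS}'s normalization of the Koszul-dual parametrization so that the conjugations fall precisely on $\mu$ and $\varkappa$ and on no other diagram; once the conventions are aligned, (e) becomes a formal consequence of (c), (d), and Koszul duality.
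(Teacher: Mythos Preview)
Your proposal is correct and follows exactly the approach of the paper: the paper's entire proof is the single sentence ``All statements follow from the existence of an antiequivalence of the symmetric monoidal categories $\widehat{\mathbf T}_{\mathfrak{gl}(V)}$ and $\TT_{\mathfrak{gl}(\infty)}$ (Proposition~\ref{prop:t}), and from the respective results from \cite{DPS} concerning the category $\TT_{\mathfrak{gl}(\infty)}$.'' You have spelled out in detail what that sentence means, including the index swap $G(V_{\alpha,\beta})=\widehat{\mathbf{V}}_{\beta,\alpha}$ and the socle-to-radical translation, which the paper leaves entirely implicit. Your observation for (c), that one needs $\mathcal{A}_{sl(\infty)}\cong\mathcal{A}_{sl(\infty)}^{\mathrm{op}}$ (equivalently, a contravariant self-equivalence of $\TT_{\mathfrak{gl}(\infty)}$) to pass from an antiequivalence to an equivalence, is a genuine point the paper glosses over; it is handled by the duality $(\phantom{a})_*$ on $\TT_{\mathfrak{gl}(\infty)}$ alluded to just after the corollary.
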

	\begin{proof}
	    All statements follow from the existence of an antiequivalence of the symmetric monoidal categories $\widehat{\mathbf T}_{\mathfrak{gl}(V)}$ and ${\TT}_{\mathfrak{gl}(\infty)}$ (Proposition~\ref{prop:t}), and from the respective results from \cite{DPS} concerning the category ${\TT}_{\mathfrak{gl}(\infty)}$.
	\end{proof}
	
	We conclude the paper by presenting briefly a universality property of the symmetric monoidal category $ \widehat{\mathbf{T}}_{\mathfrak{gl}(V)} $.
	
	Recall that to every object $ X $ of the category $ \TT_{\mathfrak{gl}(\infty)} $ one assigns its dual $ X_{*} $ and that there is a canonical morphism surjective $ X\otimes X_{*}\to\CC $. The category $ \TT_{\mathfrak{gl}(\infty)} $ is not a rigid symmetric monoidal category according to the definition in \cite{EG}, since the morphism $ X\otimes X_{*}\to\CC $ admits in general no splitting $ \CC\to X\otimes X_{*} $.
	
	In comparison, to every object $ Y $ of the category $ \widehat{\mathbf{T}}_{\mathfrak{gl}(V)} $ one assigns a dual $ Y^{\vee} $ defined as $ ((Y^{*})_{*} )^{*}$, where the functor $ (\phantom{a})_{*}\colon\mathbf{T}_{\mathfrak{gl}(V)}\to \mathbf{T}_{\mathfrak{gl}(V)} $ is transferred to the category $ \mathbf{T}_{\mathfrak{gl}(V)} $ via the equivalence $ \mathbf{T}_{\mathfrak{gl}(V)}\overset{(\phantom{a})^{wt}}{\to}\TT_{\mathfrak{gl}(\infty)}$. However, since $ \widehat{\mathbf{T}}_{\mathfrak{gl}(V)} $ is antiequivalent to $ \TT_{\mathfrak{gl}(\infty)} $, there is an injective morphism $ \CC\to X\widehat{\otimes}^! X^{\vee} $ which does not split in general.
	
	\begin{cor}
		Let $ (\mathcal{T}, \otimes) $ be an abelian linear symmetric monoidal category. Assume that two objects $ X, Y\in\mathcal{T} $ and nonzero morphism $\mathbf{1}\to X\otimes Y $  are given, where $\mathbf{1}$ is the monoidal unit of $ \mathcal{T} $. Then there exists a right-exact monoidal functor 
		$$
		F\colon \widehat{\mathbf{T}}_{\mathfrak{gl}(V)}\to \mathcal{T},
		$$
		satisfying  $ F(V) = X $, $ F(\mathbf{V}) = Y$, and sending the injection $ \CC\mathrm{Id}\to \mathfrak{gl}(V) $ to the monomosphism $ \mathbf{1}\to X\otimes Y$.
	\end{cor}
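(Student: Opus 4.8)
The plan is to deduce this property formally from the antiequivalence of symmetric monoidal categories established in Proposition~\ref{prop:t}(iv) together with Theorem~\ref{thm:main}(iii), combined with the known universality of $\TT_{\mathfrak{gl}(\infty)}$ recalled in the Introduction. Composing the antiequivalence $(\phantom{a})^\ast\colon \widehat{\mathbf{T}}_{\mathfrak{gl}(V)}\to\mathbf{T}_{\mathfrak{gl}(V)}$ with the monoidal equivalence $(\phantom{a})^{wt}\colon\mathbf{T}_{\mathfrak{gl}(V)}\to\TT_{\mathfrak{gl}(\infty)}$ yields a contravariant symmetric monoidal equivalence $G\colon\widehat{\mathbf{T}}_{\mathfrak{gl}(V)}\to\TT_{\mathfrak{gl}(\infty)}$, equivalently a covariant symmetric monoidal equivalence $G\colon\widehat{\mathbf{T}}_{\mathfrak{gl}(V)}\xrightarrow{\ \sim\ }\TT_{\mathfrak{gl}(\infty)}^{\mathrm{op}}$. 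First I would record the explicit values of $G$ on the generators: unwinding $V=\widehat{\mathbf{V}}^{1,0}$, $\mathbf{V}=\widehat{\mathbf{V}}^{0,1}$ and using Lemma~\ref{lem:t2}, one checks that $G(V)=V_*$, $G(\mathbf{V})=V$ and $G(\mathfrak{gl}(V))=V\otimes V_*\cong\mathfrak{gl}(\infty)$, and that $G$ sends the injection $\CC\mathrm{Id}\to\mathfrak{gl}(V)$ to the canonical contraction $V\otimes V_*\to\CC$ (the arrow is reversed by the contravariance of $G$).

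Next I would pass to the opposite category. Since $\mathcal{T}$ is abelian linear symmetric monoidal, so is $\mathcal{T}^{\mathrm{op}}$, with the opposite tensor, the same unit $\mathbf{1}$, and the opposite symmetry; moreover the given nonzero morphism $\mathbf{1}\to X\otimes Y$ in $\mathcal{T}$ becomes a nonzero morphism $X\otimes Y\to\mathbf{1}$ in $\mathcal{T}^{\mathrm{op}}$, that is, a pairing of the type required by the universal property of $\TT_{\mathfrak{gl}(\infty)}$. Applying that universal property to $\mathcal{T}^{\mathrm{op}}$, with the image of $V$ taken to be $Y$ and that of $V_*$ taken to be $X$, and with the contraction $V\otimes V_*\to\CC$ sent to the pairing $Y\otimes X\to\mathbf{1}$ obtained from $X\otimes Y\to\mathbf{1}$ via the symmetry, produces a left-exact symmetric monoidal functor $\Phi\colon\TT_{\mathfrak{gl}(\infty)}\to\mathcal{T}^{\mathrm{op}}$; the universal functor is left-exact since it is determined by injective copresentations by the mixed tensors $V^{\otimes p}\otimes V_*^{\otimes q}$, which are the injective objects of $\TT_{\mathfrak{gl}(\infty)}$. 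The swap $V\mapsto Y$, $V_*\mapsto X$ is dictated precisely by the identities $G(V)=V_*$, $G(\mathbf{V})=V$ from the previous step.

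Finally I would set $F:=\Phi^{\mathrm{op}}\circ G$. Regarding $\Phi$ as a functor $\TT_{\mathfrak{gl}(\infty)}^{\mathrm{op}}\to\mathcal{T}$ turns left-exactness into right-exactness and preserves the strong symmetric monoidal structure; composing with the exact monoidal equivalence $G$ therefore gives a right-exact symmetric monoidal functor $F\colon\widehat{\mathbf{T}}_{\mathfrak{gl}(V)}\to\mathcal{T}$. By construction $F(V)=\Phi^{\mathrm{op}}(V_*)=X$ and $F(\mathbf{V})=\Phi^{\mathrm{op}}(V)=Y$, and tracking the contraction through $\Phi^{\mathrm{op}}$ shows that $F$ carries $\CC\mathrm{Id}\to\mathfrak{gl}(V)$ to the morphism $\mathbf{1}\to X\otimes Y$ (which is a monomorphism because $\mathbf{1}$ is simple, so a nonzero map out of it has zero kernel). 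The genuine mathematical content is entirely carried by the antiequivalence of Proposition~\ref{prop:t} and the cited universality of $\TT_{\mathfrak{gl}(\infty)}$, so the corollary is essentially formal; I expect the only real care to be needed in the bookkeeping of the duality, namely correctly matching the swap $V\leftrightarrow V_*$, converting the copairing $\mathbf{1}\to X\otimes Y$ into a pairing in $\mathcal{T}^{\mathrm{op}}$, and verifying that the exactness and monoidality hypotheses transport correctly across the passage to $\mathcal{T}^{\mathrm{op}}$.
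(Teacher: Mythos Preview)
Your proposal is correct and follows exactly the line the paper intends: the paper states this corollary without proof, as an immediate consequence of the monoidal antiequivalence of Proposition~\ref{prop:t} combined with the universality of $\TT_{\mathfrak{gl}(\infty)}$ recalled in the Introduction, and your argument spells out precisely this deduction by passing to $\mathcal{T}^{\mathrm{op}}$ and composing. One small caution: your parenthetical that the given morphism is a monomorphism ``because $\mathbf{1}$ is simple'' is not justified by the stated hypotheses on $\mathcal{T}$, but this is tangential to the construction of $F$.
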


\end{document}